\newtheorem{theorem}{Theorem}[section]
\newtheorem{lemma}[theorem]{Lemma}
\newtheorem{proposition}[theorem]{Proposition}
\theoremstyle{definition}
\newtheorem{definition}[theorem]{Definition}
\theoremstyle{remark}
\newtheorem{remark}{Remark}[theorem]
\numberwithin{equation}{section}
\newcommand{\N}{\mathbb N}
\newcommand{\SG}{\mathcal G}
\newcommand{\CC}{\mathrm{C}}
\newcommand{\PC}{\mathcal{P}}
\newcommand{\T}{\mathcal{T}_f}
\newcommand{\Tz}{\mathcal{T}_f^0}
\newcommand{\HH}{\mathrm{H}}
\newcommand{\HHS}{\mathrm{H}^\mathrm{S}}
\newcommand{\ZZ}{\mathrm{Z}}
\newcommand{\BB}{\mathrm{B}}
\newcommand{\gm}{ k_1}
\newcommand{\Ord}{\mathcal{O}}
\newcommand{\aaa}{\mathcal{A}}
\newcommand{\sss}{\mathcal{S}}
\newcommand{\cl}{\mathrm{Cl}}
\begin{document}

\title{$P$-persistent homology of finite topological spaces}
\author{F. Vaccarino, A. Patania, G. Petri}

\maketitle

\begin{abstract}

Let $P$ be a finite partially ordered set, briefly a finite \textit{poset}. We will show that for any $P$-persistent object $X$ in the category of finite topological spaces, there is a 
$P-$ weighted graph, whose clique complex has the same $P$-persistent homology as $X$.

\end{abstract}

\section{Introduction}

The study of topological spaces and the related computational methods are receiving an unprecedented attention from fields as diverse as biology and social sciences \cite{carlsson 2009}, \cite{chung}, \cite{nicolau}, \cite{topologicalstrata} and \cite{homscaffold}. 

The original motivation of this work is to provide a firm mathematical background for the results obtained in \cite{topologicalstrata,homscaffold}, where the authors defined a filtration of a weighted network directly in terms of edge weights. 
The rationale behind this was the observation that embedding a network into a metric space generally obfuscates most of its interesting structures \cite{eccs}, which become instead evident when one focuses on the weighted connectivity structures without enforcing a metric.

Figure \ref{example_figure} illustrates this through the $H_1$ and $H_2$ persistent diagrams for two different filtrations obtained from a dataset of face-to-face contacts among children in an elementary school (see the Sociopatterns project \cite{sociopatterns} for details). 

The \textit{metrical filtration} is obtained in the standard way: given a metric (weighted shortest path in this case), one constructs a sequence of Rips-Vietoris complexes by studying the change in the overlap of $\epsilon$-neighbourhoods of vertices while varying their radius $\epsilon$ (Figure \ref{example_figure} right). 
The non metrical one relies instead on associating { clique} complexes to a series of binary networks obtained from a progressively descending thresholding on edge weights (Figure \ref{example_figure} left). 
The difference between the diagrams of the two filtration is evident: in the first case, most of the generators have short persistent and are thus distributed along the diagonal; in the second generators display a range of persistents, including some very large ones that signal the presence of interesting heterogeneities in the network structure. 
\begin{figure*}[!pbt]
\centering 
\includegraphics[width=.86\textwidth]{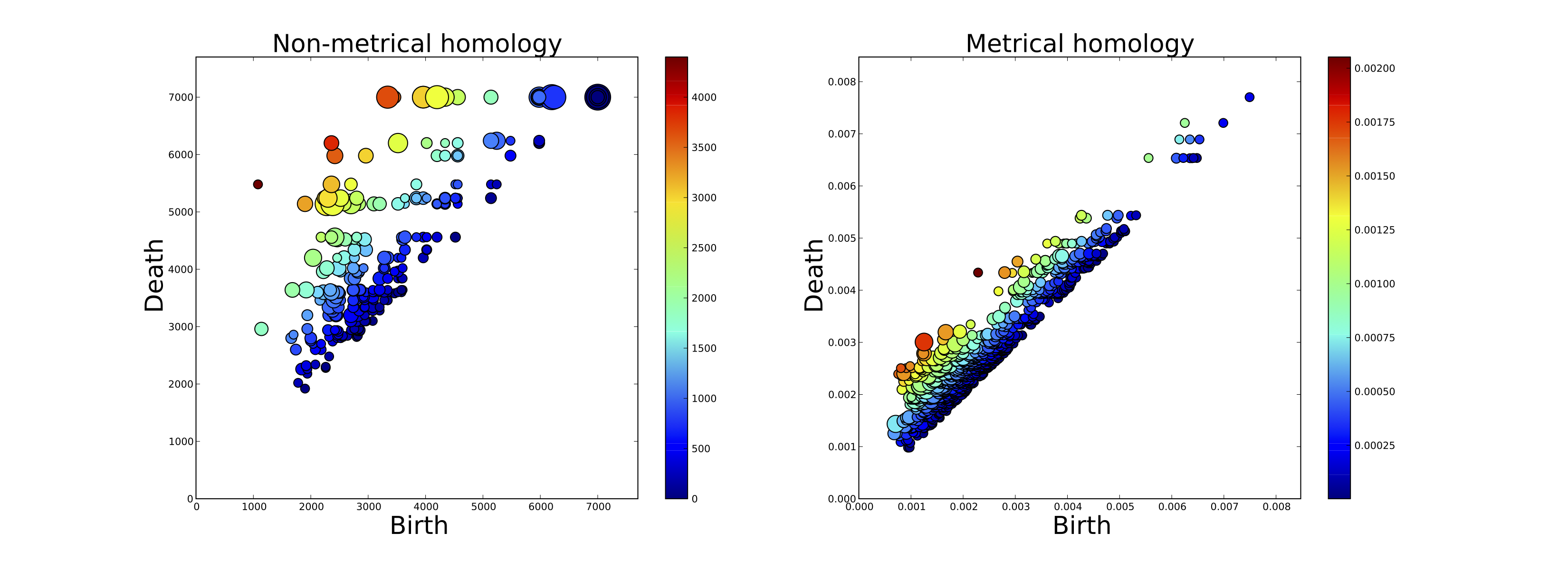}
\includegraphics[width=.86\textwidth]{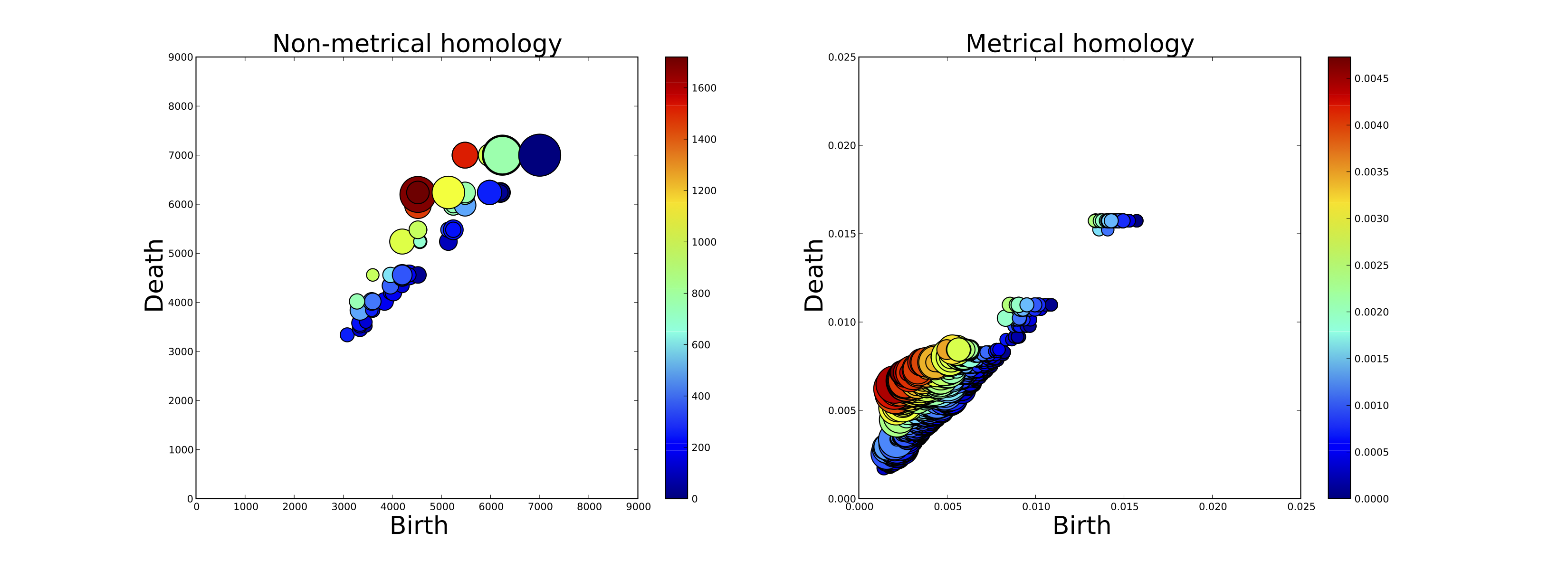}
\caption{Comparison of results obtained for $H_1$ ({\it top row}) and $H_2$ ({\it bottom row}) on the Sociopatterns network with weighted graph filtration ({\it left}) and metrical filtration ({\it right} as described in the main text. The colour of dots represents the corresponding generator's persistent, while the size is proportional to number of simplices composing a chain representative of the generator's  class (we choose here as representative the standard output of javaplex \cite{javaplex}).}\label{example_figure}
\end{figure*}

More interestingly, we will see that the attempt at providing a mathematical formalisation of the approach used in \cite{topologicalstrata} yields a much more general result regarding $P-$ persistent homology. \\

In order to obtain this result, we need to introduce a number of notions. 
In the next section (Sec. \ref{pers}) we briefly define the notion of $P-$persistence, which is our main object of study. Subsequently we lay down the vocabulary needed, introducing finite topological spaces and their equivalence to partially ordered sets, the relation between simplicial complexes, their associated order complexes and graphs. 
Finally, in Section \ref{hom} we introduce the homology and prove the theorem in Section \ref{onegraph}, highlighting how the metrical case is only a specific and constrained case of the possible network weightings.

\section{$P$-persistence}\label{pers}

Let us recall the definition of poset.
\begin{definition}
A partially ordered set, briefly a \textit{poset}, is a pair $P=(P,\leq)$, where $P$ is a set and $\leq$ is an order relation on it, i.e. a reflexive, antisymmetric, and transitive relation on $P.$\\
Posets form a category, denoted by $\PC$, where morphisms are the order preserving functions. In this paper we will consider only finite posets.
\end{definition}

\begin{remark}\label{poscat}
Every poset $P$ is a category on its own, where the objects are the elements of $P$, and there is a (unique) morphism $x\to y$ if and only if $x\leq y$, for all $x,y\in P.$ 
\end{remark}
\medskip
Following Section 2.3 in \cite{carlsson 2009}, a $P$-persistent object is given by the following definition.

\begin{definition} 
Let $P$ be a poset and $\aaa$ an arbitrary category. A $P$\emph{-persistent object} in $\aaa$ is a functor $\varphi:P\to \aaa.$
$P$-persistent objects in $\aaa$ with their natural transformation form a category, which we will denote, as usual, by ${{ \aaa}^P}.$
Given two categories $\mathcal{A}$ and $\mathcal{B},$ to any functor $\phi:\mathcal{A}\to \mathcal{B}$ it corresponds a functor $\mathcal{A}^P\to \mathcal{B}^P$.
It is given by $\varphi\in\mathcal{A}^P\mapsto \phi\circ\varphi$. It will be denoted by $\phi^P.$
\end{definition}

As said in the introduction, we are interested in studying persistent objects in the category of finite topological spaces and in a suitable category of graphs. Let us then introduce the reader to the categories we are interested in and some functors relating them.
\subsection{Basic categories and functors}

We want to start with a few considerations on finite topological spaces i.e. topological spaces with a finite number of elements, which we can imagine to be given as some sampling taken from a dataset.  
Finiteness is not a constraint for our purposes, since every application will have a finite data space, although possibly very large. \\

Finite topological spaces form a subcategory, denoted by $\T$, of the category $\mathcal{T}$ of 
 topological spaces and continuous maps. 
 
 A $\mathrm{T_0}$ space is a topological space such that for any two different points $x$ and $y$ there is an open set which contains one of these points and not the other. Two such points will be called topologically distinguishable. It is clear that this property is highly desirable in order to be able to extract meaningful information from a topological space.
 
In this paper we will denote by $\Tz$ the category of finite $\mathrm{T_0}-$spaces.

From here on when we write topological space we will intend finite topological space if not elsewhere stated.

\subsubsection{Kolmogorov quotient}
It may happen that a space we are working with is not $\mathrm{T_0},$ but this difficulty is easy to overcome as it is shown by the following, which is well known, see e.g. \cite{Barmak} Proposition 1.3.1.
\begin{proposition}\label{kolmo}
Let $X$ be a finite space not $\mathrm{T_0}$. Let $X/\sim$ be the Kolmogorov quotient of $X$ defined by $x\sim y$ if it does not exists an open set which contains one of these points and not the other. Then, $X/\sim$ is $\mathrm{T_0}$ and the quotient map $q:X\to x_0$ is a homotopy equivalence.\\
The Kolmogorov quotient $X\to X/\sim$ induces a functor from the category of topological spaces to the category of $\mathrm{T_0}-$spaces.
\end{proposition}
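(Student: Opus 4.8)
The plan is to verify the three assertions of Proposition~\ref{kolmo} in sequence. First I would check that $X/\!\sim$ is genuinely $\mathrm{T_0}$. The relation $x\sim y$, defined by the failure of topological distinguishability (no open set separates them), is plainly reflexive and symmetric; transitivity follows because in a finite space each point $x$ has a \emph{minimal} open neighbourhood $U_x$ (the intersection of all open sets containing $x$, which is open by finiteness), and $x\sim y$ is equivalent to $U_x=U_y$. Working with these minimal neighbourhoods turns the whole argument combinatorial. Once $\sim$ is an equivalence relation, I give $X/\!\sim$ the quotient topology and argue that two distinct classes $[x]\neq[y]$ satisfy $U_x\neq U_y$, whence (without loss of generality) some point separating $x$ from $y$ descends to a saturated open set separating the classes, giving the $\mathrm{T_0}$ property.

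Next I would prove that the quotient map $q\colon X\to X/\!\sim$ is a homotopy equivalence. The clean way is to exhibit a continuous section $s\colon X/\!\sim\,\to X$ by choosing a representative in each class and then show $s\circ q\simeq \mathrm{id}_X$ via the specialization preorder. Concretely, finite spaces correspond to preorders through $x\le y \iff x\in U_y$, and a standard lemma (see \cite{Barmak}) states that if $f,g\colon Y\to X$ are maps with $f(y)\le g(y)$ pointwise then $f\simeq g$. Since $q(x)=q(s(q(x)))$ forces $x$ and $s(q(x))$ to be topologically indistinguishable, they have the same minimal neighbourhood, so $x\le s(q(x))$ and $s(q(x))\le x$ both hold; the comparability lemma then yields $s\circ q\simeq \mathrm{id}_X$, while $q\circ s=\mathrm{id}_{X/\sim}$ on the nose. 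I expect this homotopy step to be the main obstacle, since it requires setting up the dictionary between finite spaces and preorders and invoking the order-homotopy lemma rather than producing an explicit deformation by hand.

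Finally I would promote the construction to a functor $\T\to\Tz$. Given a continuous map $f\colon X\to Y$ between finite spaces, I must check that $f$ respects indistinguishability, i.e. $x\sim x'$ implies $f(x)\sim f(x')$; this is immediate because continuity gives $f^{-1}(U_{f(x)})\supseteq U_x$, so points sharing a minimal neighbourhood are sent to points that cannot be separated. Hence $f$ descends to a well-defined continuous $\bar f\colon X/\!\sim\,\to Y/\!\sim$, and functoriality (preservation of identities and composites) follows from the universal property of the quotient. This last part is routine diagram-chasing once the well-definedness is in place.
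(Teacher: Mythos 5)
Your proof is correct. The paper gives no argument of its own for this proposition --- it is stated as well known and delegated to Barmak, Proposition 1.3.1 --- and your write-up reconstructs exactly the standard proof from that reference: minimal open neighbourhoods identify $\sim$ with the relation $U_x=U_y$, the saturated open sets $U_x$ yield the $\mathrm{T_0}$ property of the quotient, a continuous section together with the order-homotopy lemma for comparable maps gives $s\circ q\simeq \mathrm{id}_X$, and the inclusion $f(U_x)\subseteq U_{f(x)}$ gives well-definedness and functoriality.
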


Since homology is defined up to weak homotopy equivalence, the Kolmogorov quotient allows us to restrict our analysis from general topological spaces to $\mathrm{T_0}-$spaces without any loss of information. 

\subsubsection{Finite $\mathrm{T_0}-$spaces are posets}

\begin{theorem}\label{tzcongpc}
There is an isomorphism of categories:
\[\Tz\cong\PC\]
\end{theorem}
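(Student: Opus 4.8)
The plan is to establish the isomorphism $\Tz\cong\PC$ by constructing a pair of mutually inverse functors and checking that they respect morphisms in both directions. The classical idea (the Alexandrov correspondence) is that on a finite space the topology is completely determined by the intersection of all open sets containing each point, so I would proceed as follows.

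\medskip

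First I would associate to each finite $\mathrm{T_0}$-space $X$ a poset. For a point $x\in X$, let $U_x$ denote the minimal open set containing $x$, which exists because $X$ is finite and the topology is therefore closed under arbitrary intersections; concretely $U_x$ is the intersection of all open sets containing $x$. I would then define a relation on $X$ by setting $x\leq y$ if and only if $x\in U_y$ (equivalently $U_x\subseteq U_y$). Reflexivity is immediate, and transitivity follows from $U_x\subseteq U_y\subseteq U_z$. The key point where the $\mathrm{T_0}$ hypothesis enters is antisymmetry: if $x\leq y$ and $y\leq x$ then $U_x=U_y$, and since in a $\mathrm{T_0}$ space two points with the same minimal open set are topologically indistinguishable, they must coincide. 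Thus $(X,\leq)$ is genuinely a poset, and this is the step where finiteness and $\mathrm{T_0}$ are both essential.

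\medskip

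Second, in the reverse direction, given a finite poset $P$ I would topologize it by declaring the open sets to be the down-sets (or up-sets, depending on the chosen convention matching the inequality above), i.e. the subsets $U$ such that $y\in U$ and $x\leq y$ imply $x\in U$. This is the Alexandrov topology; it is closed under arbitrary unions and intersections, and one checks directly that the minimal open set containing a point $y$ is exactly $\{x : x\leq y\}$, so that the $\mathrm{T_0}$ axiom holds precisely because antisymmetry of $\leq$ separates distinct points. I would then verify on objects that the two constructions are mutually inverse: starting from a poset, forming its Alexandrov space, and reading off the specialization order returns the original order; and starting from a space and topologizing its specialization poset recovers the original topology because the minimal open sets already generate it.

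\medskip

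The remaining and, in my view, the main content is functoriality on morphisms: I must show that a function $f:X\to Y$ between finite $\mathrm{T_0}$-spaces is continuous if and only if it is order-preserving with respect to the induced orders, so that continuous maps correspond bijectively to monotone maps of posets. The order-preserving-implies-continuous direction reduces to checking that preimages of minimal open sets are open, which follows from monotonicity; the converse uses that continuity forces $f(U_x)\subseteq U_{f(x)}$. Once this equivalence of morphisms is in place, identity and composition are clearly preserved on both sides, giving a pair of functors $\Tz\to\PC$ and $\PC\to\Tz$ that are inverse to one another both on objects and on morphisms, which is exactly an isomorphism of categories. I expect the verification of the morphism correspondence to be the most delicate bookkeeping, but no single step is deep — the real conceptual weight sits in the $\mathrm{T_0}$/antisymmetry interplay already handled above.
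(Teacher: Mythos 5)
Your proposal is correct and follows essentially the same route as the paper: the Alexandrov correspondence between minimal neighbourhoods and the specialization order, with the two constructions checked to be mutually inverse and compatible with morphisms. The only difference is a dual convention (the paper takes $U_x$ to be the intersection of the \emph{closed} sets containing $x$ and declares lower sets closed, whereas you use minimal open sets and open down-sets, which reverses the order but changes nothing categorically), and your write-up supplies the verifications that the paper simply delegates to Barmak's book.
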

\begin{proof}
Let $X\in\Tz$, for $x\in X$ let $U_x$ be the intersection of all the closed sets in X that contain $x$. Then we can give in $X$ an order relation in the following way:
\begin{equation}\label{rel}
x\leq y \leftrightarrow U_x\subseteq U_y
\end{equation}
Since $X$ is $\mathrm{T_0}$ this relation is a partial order. In this way we have a correspondence $X\mapsto (X,\le)$ which induces a functor $\Tz\to\PC$.\\
On the other end, a poset $P\in\PC$ is also a topological space via the \textit{Alexandrov topology}. In this topology the closed sets are the \textit{lower sets}:
$\Gamma\subset P $ such that $\forall x,y\in P$ with $ x\in\Gamma$ and $y\leq x$ implies that $y\in \Gamma$.
A poset endowed with this topology satisfies the $\mathrm{T_0}$ condition. 
The assignment of this topology on $P$ induces a functor $\PC\to\Tz$ which is left and right inverse of the previous one, that is:
\begin{equation}
\Tz\cong\PC
\end{equation}
We refer the reader to Chapter 1 \cite{Barmak} for details.
\end{proof}

\begin{remark}
(i) A preordered set is a set endowed with a binary relation which is reflexive and transitive. A poset is a special kind of preorder obtained by requiring the relation to be antisymmetric.
Preordered sets form a category $Preord$ the same way as posets. 
The isomorphism in Theorem \ref{tzcongpc} can be extended to an isomorphism $\T\cong Preord_f,$ where the latter is the category of finite preordered sets.\\
(ii) Let $X\in\T$  be a $T_1-$space, i.e. for all $x,y\in X$ there exist two open sets $A,B\subseteq X$ such that $x\in A$, $y\in B$ and $y\notin A$, $x\notin B$.
Then $X$ has the discrete topology and the process just described is not very informative because in this case the poset $(X,\leq)$ is discrete. \\
We will see how to deal with finite metric spaces, that are $T_1$, in the last section of this paper.
\end{remark}

From now on we will identify any $X\in \T$ with the poset associated to its Kolmogorov quotient i.e., by abuse of notation, we will write 
\[X=(X,\le)=(X/\sim, \le)\] 
where $\leq$ is the order relation given in (\ref{rel}).

\subsubsection{Simplicial Complexes}
The basic idea of a simplicial complex is that of gluing together, in a coherent way, points, lines, triangles, tetrahedra, and higher dimensional equivalents. We will give now a more formal definition.
\begin{definition}
An (abstract) \emph{simplicial complex} is a non empty family $\Sigma$ of finite subsets, called faces, of a vertex set $V$ such that $\sigma\subset \tau \in \Sigma$ implies that $\tau\in \Sigma.$
\end{definition}

We assume that the vertex set is finite and totally ordered. A face of $n+1$ vertices is called $n-$face, denoted by $[p_0,\ldots, p_n]$, and $n$ is its dimension. We set, as usual, the dimension of the empty set as -1 , following Section 2.1 in \cite{Kozlov}.\\
A $0-$face is a vertex, a $1-$face is an edge, a $2-$face is a full triangle, a $3-$face is a full tetrahedron, etc.\\
The \textit{dimension} of a simplicial complex the highest dimension of the faces in the complex. We call \textit{vertex set} $V$ of $\Sigma$ the union of the one point elements  of $\Sigma$.\\
For a simplicial complex $\Sigma$ and a non negative integer $k$ we denote by $\Sigma_k$ its $k-skeleton$ which is defined as
\begin{equation}\label{ale}
\Sigma_k:=\{\sigma\in\Sigma\, : \, \dim\sigma\leq k\}
\end{equation}
The $k-$skeleton of $\Sigma$ is a simplicial complex in the obvious way.\\
Simplicial complexes form a category, $\sss$, where a morphism of simplicial complex is called \textit{simplicial map} and is given by a map on vertices such that the image of a face is again a face.\\

We are going to remind some well known relations between simplicial complexes, topological spaces and posets.

\begin{proposition}
There exists a functor $\Ord:\mathcal{P}\to\sss$ which associates to every poset $P$ a simplicial complex, called the order complex.
\end{proposition}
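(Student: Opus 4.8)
The plan is to construct $\Ord$ explicitly on objects and on morphisms, and then to verify the two functor axioms. On objects, given a poset $P=(P,\leq)$, I would set $\Ord(P)$ to be the family of all finite \emph{chains} of $P$, i.e. the finite subsets $\sigma\subseteq P$ that are totally ordered by $\leq$. To see that this is a genuine simplicial complex in the sense defined above, it suffices to observe that any subset of a chain is again a chain: if $\tau\subseteq\sigma$ and $\sigma$ is totally ordered, then so is $\tau$, which is exactly the required closure under passing to subsets, namely $\tau\subseteq\sigma\in\Ord(P)$ implies $\tau\in\Ord(P)$. The vertex set of $\Ord(P)$ is then $P$ itself, since every singleton $\{x\}$ is trivially a chain.

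On morphisms, given an order-preserving map $f\colon P\to Q$ in $\PC$, I would define $\Ord(f)$ to be the map on vertices induced by $f$. The key point to check is that this is a simplicial map, i.e. that it sends faces to faces. If $\sigma=\{x_0<x_1<\dots<x_n\}$ is a chain in $P$, then order-preservation gives $f(x_0)\leq f(x_1)\leq\dots\leq f(x_n)$, so the image set $f(\sigma)$ is totally ordered and hence a chain of $Q$, that is a face of $\Ord(Q)$. I expect this to be the only step requiring a moment of care: $f$ need not be injective, so $f(\sigma)$ may have strictly smaller cardinality than $\sigma$. This is harmless, because the definition of simplicial map only demands that the image of a face be a face \emph{as a set}, with no constraint on dimension, so the collapsing of a chain onto a shorter chain is permitted.

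Finally, I would verify the functor axioms, which become immediate once the vertex maps are identified with the underlying poset maps. The map $\Ord(\mathrm{id}_P)$ is the identity vertex map, hence the identity simplicial map on $\Ord(P)$; and for composable $P\xrightarrow{f}Q\xrightarrow{g}R$ the vertex map of $\Ord(g\circ f)$ is $g\circ f$, which coincides with the composite of the vertex maps of $\Ord(g)$ and $\Ord(f)$, so that $\Ord(g\circ f)=\Ord(g)\circ\Ord(f)$. The only bookkeeping point left is the standing assumption that the vertex set of a simplicial complex be totally ordered: since the partial order on $P$ need not be total, I would fix once and for all a linear extension of $\leq$ on each $P$ in order to orient the faces, noting that this choice is irrelevant to the functoriality established above, as it affects neither the underlying sets of faces nor the vertex maps.
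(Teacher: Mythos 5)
Your construction is exactly the paper's: $\Ord(P)$ is the complex of finite chains of $P$, and the paper's proof consists of nothing more than stating this definition. Your additional verifications (closure under subsets, that order-preserving maps induce simplicial maps even when non-injective, and the functor axioms) are all correct and fill in details the paper leaves implicit.
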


\begin{proof}
For every $P\in\PC$ we can construct a simplicial complex as follows:
\begin{description}
\item $[x_0,\dots,x_k]\in\mathcal{O}(P)$ if and only if $x_0< x_1<\dots< x_k$, for all $x_j\in P$.
\end{description}
$\Ord(P)$ is called the \textit{order complex of} $P$.
\end{proof}

\begin{definition}\label{alexS}
Every simplicial complex can be made into a topological space by considering it a poset, i.e. $\Gamma\subseteq\Sigma$ is closed if and only if $\Gamma$ is a simplicial complex. 
This gives a functor $\pi:\sss\to\PC\simeq\Tz$ by $\pi(\Sigma)=(\Sigma, \subseteq)$ the poset with elements the simplices in $\Sigma$ and as partial order the inclusion of simplices.\\
Given a simplicial complex $\Sigma$ we write $\mathcal{O}(\Sigma):=\mathcal{O}(\pi(\Sigma))$. The simplicial complex $\mathcal{O}(\Sigma)$ is called the \textit{barycentric subdivision} of $\Sigma$.\\
 By abuse of notation we will also write $\mathcal{O}(X):=\mathcal{O}(X/\sim,\leq)$ for all $X\in\T$ via the isomorphism in Theorem \ref{tzcongpc}.
 \end{definition}

It is well known that $\Sigma$ and $\mathcal{O}(\Sigma),$ endowed with the Alexandrov topology, are weakly homotopy equivalent. We refer the interested reader to \cite{Barmak} for further details.

\subsubsection{Graphs}
\begin{definition}
A \textit{reflexive graph} is a pair $G=(V,E)$, where $V$ is a finite set whose elements are called vertices and  $E=\Delta_{V\times V}\cup E'$ with $E'\subseteq {V\choose 2}$, i.e. is a graph which has an edge (called \textit{self-loop}) $(v,v)$ for every vertex $v\in V$.
Equivalently, reflexive graphs can be seen as one dimensional simplicial complexes, identifying self-loops and vertices with $0$-simplices and edges with $1$-simplices. We will denote by $\SG$ the category with objects reflexive graphs and morphisms the simplicial maps defined via the given identification with one dimensional simplicial complexes. 
\end{definition}

It should be clear that $\SG$ is isomorphic to the full subcategory of $\sss$ whose objects are the one dimensional simplicial complexes.\\

\begin{remark}
It is useful to notice that, since graphs in $\mathcal{G}$ are defined as $G=(V,E)$, then the null graph $G_\emptyset=(\emptyset,\emptyset)$ is an object in $\mathcal{G}$.
\end{remark}

\begin{definition}
A \textit{{ clique}} in a graph $G=(V,E)$ is a complete subgraph of $G$ i.e. is a subgraph $G'=(V',E')$ with $V'\subset V$ and $E'\subset E$ such that $E'= {V'\choose 2}$.\\
Given  a graph $G\in\SG$ there is a covariant functor, $\cl:\SG\to \sss$, called the \textit{{ clique} functor} given by $[v_0,\dots,v_k]\in \cl(G)$ if and only if $(v_i,v_j)\in E$ for all $0\leq i\neq j\leq k.$. 
\end{definition}
Note that this is well defined because $(v,v)\mapsto [v],$ for all $v\in V.$
\begin{definition}
Given a simplicial complex $\Sigma$ there is a functor $ k_1:\sss \to \SG$ where $\gm(\Sigma)$ is the (reflexive) graph corresponding to the $1-$skeleton of $\Sigma$.
\end{definition}
\begin{remark}
In general $\Sigma\neq\cl( k_1(\Sigma))$. For example if we consider $\Sigma=\{[a],[b],[c],[a,b],[a,c],[b,c]\}$, this is a simplicial complex but $\cl( k_1(\Sigma))=\Sigma\cup\{[a,b,c]\}$. 
\end{remark}
\begin{definition}
A simplicial complex $\Sigma$ is called a \textit{{ flag}} (or a \textit{{ clique}} complex) if $\Sigma=\cl( k_1(\Sigma))$. { flag} complexes form a subcategory of $\sss$ denoted by $\mathcal{F}$.
\end{definition}
\begin{remark}\label{**}
It is easy to see that $\mathcal{O} (X)$ is a { flag} complex for all $X\in\T.$\\
In particular this implies that, for all $\Sigma\in\sss,$ the barycentric subdivision $\Ord(\Sigma)$ is a { flag} complex.\\
\end{remark}

\begin{proposition}
The functors $\cl:\mathcal{G}\to\mathcal{F}$ and ${\gm}_{|\mathcal{F}}:\mathcal{F}\to \mathcal{G}$ give an isomorphism $\mathcal{G}\simeq \mathcal{F}.$
\end{proposition}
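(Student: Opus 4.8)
The plan is to show that the two functors are mutually inverse, by verifying that both composites $\cl\circ\gm_{|\mathcal{F}}$ and $\gm_{|\mathcal{F}}\circ\cl$ equal the respective identity functors, treating first the action on objects and then the action on morphisms. Since $\cl$ and $\gm=k_1$ have already been shown to be functors, the only genuinely new content is that $\cl$ corestricts into $\mathcal{F}$, that $\gm$ restricts to $\mathcal{F}$, and that the two composites are identities.

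On objects, the identity $\cl\circ\gm_{|\mathcal{F}}=\mathrm{id}_{\mathcal{F}}$ is immediate from the definition of a flag complex: every $\Sigma\in\mathcal{F}$ satisfies $\Sigma=\cl(\gm(\Sigma))$, which is exactly $\cl(\gm_{|\mathcal{F}}(\Sigma))=\Sigma$. For the other composite, given a reflexive graph $G=(V,E)\in\mathcal{G}$, I would observe that the $0$- and $1$-faces of $\cl(G)$ are precisely the vertices of $G$ together with the pairs $[v_i,v_j]$ for which $(v_i,v_j)\in E$; hence the $1$-skeleton $\gm(\cl(G))$ has the same vertex and edge set as $G$, giving $\gm_{|\mathcal{F}}(\cl(G))=G$. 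In particular this also confirms that $\cl(G)$ is a flag complex, so $\cl$ indeed lands in $\mathcal{F}$.

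The substantive point is the behaviour on morphisms, and here the flag condition does the essential work. First I would check that both functors are well defined on arrows: $\cl$ carries a graph morphism (a vertex map sending each edge to an edge or to a self-loop) to a simplicial map of the associated clique complexes, while $\gm_{|\mathcal{F}}$ restricts a simplicial map of flag complexes to its action on $1$-skeletons. The key lemma is that, for flag complexes, a vertex map $f\colon V\to V'$ is simplicial as a map $\Sigma\to\Sigma'$ if and only if it is a graph morphism on the $1$-skeletons. The nontrivial direction is that a graph morphism automatically preserves all higher faces: if $\{v_0,\dots,v_k\}$ is a clique, then any two images $f(v_i),f(v_j)$ are joined by an edge (or coincide), so the image set is again a clique, i.e. a face of the target flag complex. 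This is precisely where flagness is used; for a general $\Sigma$ the implication fails, as the remark on $\{[a],[b],[c],[a,b],[a,c],[b,c]\}$ illustrates.

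Granting this lemma, both $\mathrm{Hom}_{\mathcal{F}}(\Sigma,\Sigma')$ and $\mathrm{Hom}_{\mathcal{G}}(\gm\Sigma,\gm\Sigma')$ are identified with the same set of admissible vertex maps, on which the two functors act as mutually inverse bijections. Since the functors are already identities on objects, the composites are the identity functors, and we conclude $\mathcal{G}\simeq\mathcal{F}$. I expect the main obstacle to be a clean formulation and proof of this morphism lemma, that is, carefully verifying that preservation of higher faces follows from edge preservation in a flag complex.
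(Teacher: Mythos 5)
Your argument is correct, and it supplies exactly the details the paper omits: the paper's own proof of this proposition is the single word ``Obvious.'' You correctly isolate the one point where something could go wrong --- that a vertex map preserving edges automatically preserves all higher faces of a flag complex, which is precisely where the flag condition is used and which fails for general simplicial complexes --- so your writeup is a complete and faithful expansion of the intended argument.
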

\begin{proof}
Obvious.
\end{proof}

To summarize:
\begin{equation}\label{dicafu}
\xymatrix{\sss \ar [r]^(.35){\pi} \ar@/_1pc/[rr]_{\Ord\circ\pi}
&\PC\simeq\Tz \ar [r]^(.60){\Ord} 
&\ar @{} [r] |{\cong}
\mathcal{F} \ar @/_1pc/ [r]_\gm &\mathcal{G} \ar @/_1pc/ [l] _\cl\\
&&&
}
\end{equation}

\subsection{$P$-weighted graphs}
\begin{definition}
Let $P\in\PC$ be a poset and $G=(V,E)\in \SG$ a graph, let us denote by $G\in \sss$ the corresponding one dimensional simplicial complex. A $P-weighted\,\, graph$ is a pair $(G,\omega),$ where $\omega:(G, \subseteq)\to P$ is a morphism of posets i.e. a function $G\to P,$ which is continuous in the Alexandrov topology.\\
\end{definition}
\begin{definition}
We define as $\SG_P$ the category of $P-$weighted graphs, having objects $P-$weighted graphs and  whose morphisms $\alpha:(G,\omega)\to (H,\theta)$ are induced by a simplicial map $\rho:G\to H$, such that $\alpha({G}_v)\subseteq {H}_v$, where for any $v\in P$, $G_v=\{x\in G \,|\, \omega(x)\le v\}$.
\end{definition}

\section{Main results: equivalences and adjunctions}\label{onegraphsec}
We have the following.
\begin{proposition}
For all $P\in\PC,$ there is a functor $\Phi_P:\mathcal{G}_P\to \mathcal{G}^P$.
\end{proposition}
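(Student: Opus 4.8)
The plan is to send each $P$-weighted graph to the filtration of its sublevel subgraphs. Given $(G,\omega)\in\mathcal{G}_P$, recall that for $v\in P$ one has the set of simplices $G_v=\{x\in G\mid\omega(x)\le v\}$. First I would verify that each $G_v$ is a subcomplex of the one dimensional complex $G$, hence an object of $\mathcal{G}$: if an edge $e$ belongs to $G_v$ and $u\subseteq e$ is one of its endpoints, then $u\le e$ in $(G,\subseteq)$, so order preservation of $\omega$ gives $\omega(u)\le\omega(e)\le v$ and thus $u\in G_v$. This closure under faces is precisely where the hypothesis that $\omega$ is a morphism of posets enters, and it is the only genuinely nontrivial point of the whole argument. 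Moreover, whenever $v\le w$ one has $G_v\subseteq G_w$, and the inclusion $\iota_{v,w}\colon G_v\hookrightarrow G_w$ is a morphism of $\mathcal{G}$.

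I would then define $\Phi_P(G,\omega)\colon P\to\mathcal{G}$ on objects by $v\mapsto G_v$ and on the unique arrow $v\to w$ (present exactly when $v\le w$) by $\iota_{v,w}$. Functoriality is immediate, since $\iota_{v,v}=\mathrm{id}_{G_v}$ and $\iota_{v,w}\circ\iota_{u,v}=\iota_{u,w}$: composites of subcomplex inclusions are again inclusions. Hence $\Phi_P(G,\omega)$ is a bona fide object of $\mathcal{G}^P$.

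To treat morphisms, I would take $\alpha\colon(G,\omega)\to(H,\theta)$ induced by a simplicial map $\rho\colon G\to H$ with $\alpha(G_v)\subseteq H_v$ for all $v$. The condition $\rho(G_v)\subseteq H_v$ allows me to restrict $\rho$ to a simplicial map $\rho_v\colon G_v\to H_v$ for each $v$, and I would declare these $\rho_v$ to be the components of $\Phi_P(\alpha)$. Naturality is the commutativity, for $v\le w$, of the square with top edge $\rho_v$, bottom edge $\rho_w$ and the two vertical inclusions, which holds because every $x\in G_v$ is carried to $\rho(x)\in H_w$ along both routes; so $\Phi_P(\alpha)$ is a natural transformation. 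Finally I would check that $\Phi_P$ preserves identities and composition: the identity is induced by $\mathrm{id}_G$, whose restrictions are identities, and if $\beta$ is induced by $\sigma$ then $(\sigma\circ\rho)|_{G_v}=\sigma_v\circ\rho_v$, giving $\Phi_P(\beta\circ\alpha)=\Phi_P(\beta)\circ\Phi_P(\alpha)$. Since all of these are purely formal once the sublevel sets are known to be subgraphs, I expect the well-definedness of $G_v$ established in the first step to be the main, and essentially only, obstacle.
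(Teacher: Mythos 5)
Your proposal is correct and follows essentially the same route as the paper: both send $(G,\omega)$ to the sublevel filtration $v\mapsto G_v$ with inclusion maps and act on morphisms by restriction; you simply spell out the face-closure of $G_v$ and the naturality checks that the paper leaves as "easy to check."
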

\begin{proof}
Let $(G,\omega)\in \mathcal{G}_P$. From the definition of $G_v$ we have that $G_u\subseteq G_v$ for every $u\leq v$.\\
We can associate to $(G,\omega)\in\mathcal{G}_P$ a $P-$persistent object in $\varphi_G\in \mathcal{G}^P$, namely $\varphi_G(v)=G_v$ with the inclusions maps $\varphi_G(u\le v):G_u\hookrightarrow G_v$ for all $v \in P$, $u \in P_v$.
It is easy to check that the correspondence $(G,\omega)\to \varphi_G$ is natural in $G$. Therefore $\Phi_P$ is a functor between the two categories.
\end{proof}

\begin{proposition}\label{psifunctor}
For all $P\in\PC$ there exists a functor $\Psi_P:\mathcal{G}^P\to\mathcal{G}_P$.
\end{proposition}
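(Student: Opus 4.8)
The plan is to send a persistent graph $\varphi\in\mathcal{G}^P$ to the single graph obtained by gluing together all of its stages $\varphi(v)$, weighting each simplex by the stage at which it is born. First I would record that $\mathcal{G}$, being isomorphic to the full subcategory of $\sss$ of one-dimensional simplicial complexes, is cocomplete: a colimit of reflexive graphs is computed vertex set by vertex set and edge set by edge set, and the result is again a one-dimensional complex. Viewing $P$ as a category (Remark \ref{poscat}), I can therefore form $G:=\operatorname{colim}_{v\in P}\varphi(v)$ with its cocone maps $\iota_v\colon\varphi(v)\to G$, and $G$ is again an object of $\mathcal{G}$.

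Next I would define the weight. For a simplex $x\in G$ put $S_x:=\{v\in P : x\in\operatorname{im}\iota_v\}$, the set of stages at which $x$ appears. Since each structure map $\varphi(u\le v)$ is a simplicial map and the $\iota_v$ form a cocone, $S_x$ is a nonempty up-set: if $x=\iota_u(y)$ and $u\le v$ then $x=\iota_v\bigl(\varphi(u\le v)(y)\bigr)$. I would set $\omega(x):=\min S_x$ and check order preservation for the face relation: if $x\subseteq y$ in $G$ then, because every $\varphi(v)$ is a genuine simplicial complex and edges of the colimit are images of edges, any stage containing $y$ contains $x$, so $S_y\subseteq S_x$ and hence $\omega(x)\le\omega(y)$. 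Thus $\omega\colon(G,\subseteq)\to P$ is a morphism of posets, $(G,\omega)\in\mathcal{G}_P$, and by construction $x\in G_v\iff \omega(x)\le v\iff v\in S_x$.

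On morphisms, a natural transformation $\eta\colon\varphi\Rightarrow\varphi'$ induces $\bar\eta\colon\operatorname{colim}\varphi\to\operatorname{colim}\varphi'$ by the universal property of the colimit, functorially in $\eta$. I would then verify that $\bar\eta$ underlies a morphism in $\mathcal{G}_P$: if $\omega(x)\le v$ then $x=\iota_v(y)$ for some $y\in\varphi(v)$, so $\bar\eta(x)=\iota'_v(\eta_v(y))$ is born by stage $v$ in $\varphi'$ and hence $\omega'(\bar\eta(x))\le v$, i.e. $\bar\eta(G_v)\subseteq G'_v$. Preservation of identities and composites is immediate from functoriality of the colimit, so $\Psi_P(\varphi):=(G,\omega)$ and $\Psi_P(\eta):=\bar\eta$ give the desired functor.

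The main obstacle hides in the step ``$\omega(x):=\min S_x$'': in a general finite poset an up-set need not have a least element. For instance, with $P=\{a,b,c\}$, $a,b<c$ and $a,b$ incomparable, a diagram whose single vertex at each of $\varphi(a),\varphi(b)$ maps to the single vertex of $\varphi(c)$ forces a simplex to be born simultaneously at the incomparable stages $a$ and $b$, so the minimum does not exist and the weight is genuinely ill defined stage-wise. The clean remedy is to take $\omega(x):=\bigwedge S_x$; the argument above then goes through verbatim (the inclusion $S_y\subseteq S_x$ still yields $\bigwedge S_x\le\bigwedge S_y$), and I expect that with this choice $\Psi_P$ is in fact the left adjoint of $\Phi_P$, since a natural transformation $\varphi\Rightarrow\Phi_P(G,\omega)$ amounts to a simplicial map $f\colon\operatorname{colim}\varphi\to G$ with $\omega(f(x))$ a lower bound of $S_x$, which is exactly the condition $\omega(f(x))\le\bigwedge S_x$ defining a morphism out of $\Psi_P\varphi$. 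This requires $P$ to carry the relevant meets; for an arbitrary finite poset one would first pass to its meet-completion, working inside the lattice of lower sets of $P$, and argue that this does not alter the persistence. Pinning down that reduction, rather than the bookkeeping above, is where the real work lies.
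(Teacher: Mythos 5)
Your construction is genuinely different from the paper's, and the obstruction you run into at the end is real and, by your own admission, unresolved --- so as it stands this is not a proof. The difficulty is entirely self-inflicted: it arises because you insist on gluing the stages together via $\operatorname{colim}_{v\in P}\varphi(v)$ and then weighting each simplex by its ``birth time'' $\min S_x$. As you correctly observe, $S_x$ is an up-set of a general finite poset and need not have a least element, and replacing $\min$ by $\bigwedge$ both requires meets that $P$ need not possess and breaks the equivalence $x\in G_v\iff v\in S_x$ (the meet $\bigwedge S_x$ need not belong to $S_x$, so a simplex could enter $G_v$ at a stage where it has not actually appeared in $\varphi$). Deferring this to an unspecified ``meet-completion'' reduction leaves the central step of the construction undone.

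The paper avoids all of this by not gluing at all: it sets $\Psi_P(\varphi):=\bigl(\coprod_{v\in P}\varphi(v),\,\omega^\varphi\bigr)$ with $\omega^\varphi|_{\varphi(v)}=v$, the \emph{disjoint union} of the stages with each copy labelled by its own index. The face relation never crosses components, so $\omega^\varphi$ is trivially order-preserving; no minimum, meet, or hypothesis on $P$ is needed, and the construction works for every $\varphi\in\mathcal{G}^P$, not only those with injective structure maps. A natural transformation acts componentwise on the coproduct, and it preserves weights on the nose, which is why $\Psi_P$ in fact lands in $\bar{\mathcal{G}}_P$. The ``minimal model'' you are implicitly reaching for --- one graph whose sublevel sets reconstruct $\varphi(v)$ up to isomorphism --- is not what $\Psi_P$ does in the paper; that role is played by the essential-surjectivity argument in Theorem \ref{isocat}, and only on the subcategory $\mathcal{G}^P_{\mathbf{1}}$ of one-critical objects, precisely because there the existence and uniqueness of the minimum $m_{xy}$ is built into the definition. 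Likewise your conjectured adjunction points the wrong way relative to the paper: there $\bar{\Phi}_P$ is left adjoint to $\Psi_P$, and it is the restriction $\Psi_P^\iota$ that is left adjoint to $\Phi_P$.
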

\begin{proof}
Choose $\varphi\in\mathcal{G}^P,$ and, for every $v\in P$, set $\varphi_v:=\varphi(v)\in\mathcal{G}.$\\
Let  $\omega^{\varphi}:\coprod_{v\in P}\varphi_v\to P$ be given by $\omega^{\varphi}|_{\varphi_v}=v$. It is easy to check that the correspondence $\varphi\mapsto (\coprod_{v\in P}\varphi_v, \omega^{\varphi})$ is natural in $\varphi$, thus giving a functor $\Psi_P: \mathcal{G}^P\to\mathcal{G}_P$. 
\end{proof}

\begin{definition}
Let $\bar{\mathcal{G}}_P$ be the subcategory of $\mathcal{G}_P$ with the same objects, and morphisms the maps $\alpha:(C,\omega)\to (D,\omega')$ such that for every $x\in G$, $\omega'(\alpha(x)) = \omega(x)$. We set $\bar{\Phi}_P$ as the restriction of $\Phi_P$ to $\bar{\mathcal{G}}_P$.
\end{definition}
\begin{remark}
It is useful to notice that, actually, $\Psi_P:\mathcal{G}^P\to\bar{\mathcal{G}}_P$. Since $\Psi_P(\varphi)\in Ob(\mathcal{G}_P)=Ob(\bar{\mathcal{G}}_P)$ for all $\varphi\in \mathcal{G}^P$, we just show that $\Psi_P(\mu)$ preserves weights for every $\mu:\varphi\to\tau \in \mathcal{G}^P$. Indeed from the definition of the weights $\omega^{\varphi}, \omega^{\tau}$ we have that $(\omega^{\varphi})^{-1}(u)=\varphi_u$ then $\Psi_P(\mu)(\varphi_u) \subseteq\tau_u=(\omega^{\tau})^{-1}(u)$. 
\end{remark}

\begin{definition}
Let $\mathcal{G}_\iota^P$ be the subcategory of $\mathcal{G}^P$ whose objects are $\varphi\in\mathcal{G}^P$ such that the morphisms $\varphi(u\le v):\varphi(u)\to\varphi(v)$ are inclusions.\\
We set $\Psi_P^\iota$ as the restriction of $\Psi_P$ to $\mathcal{G}^P_\iota$.
\end{definition} 

Following Carlsson \cite{(Multi2)} we introduce the concept of \textit{one critical} $P-$persistent object.
\begin{definition}
Let $\varphi\in\mathcal{G}_\iota^P$. $\varphi$ is said to be one-critical if for all $v\in P$, for all $(x,y)\in E_{\varphi(a)}$
\begin{equation}\label{min}
\exists!\; m_{xy}=\min\{u\in P| \varphi_{uv}(x,y)=(x,y)\}
\end{equation}
The one-critical $P$-persistent objects form a subcategory of $\mathcal{G}_\iota^P$, which will be denoted by $\SG^P_{\mathbf{1}}$. We set $\Psi^{\mathbf{1}}_P$ as the restriction of $\Phi_P$ to $\mathcal{G}_\mathbf{1}^P$.
\end{definition}

\begin{remark}
It is useful to notice that $\Phi_P:\mathcal{G}_P\to\mathcal{G}^P_\mathbf{1}$. Indeed consider $(G,\omega)\in \mathcal{G}_P.$ By definition of $\Phi_P$, it is clear that $\varphi_G\in\mathcal{G}^P_\mathbf{1},$ with $m_{xy}=\omega(x,y).$
\end{remark}

\begin{theorem}\label{isocat}
The categories $\mathcal{G}_\mathbf{1}^P$ and $\bar{\SG}_P$ are equivalent.
\end{theorem}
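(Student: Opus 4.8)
The plan is to prove the equivalence by exhibiting an explicit quasi-inverse to $\bar{\Phi}_P$. By the remark preceding the theorem, $\bar{\Phi}_P$ does land in $\mathcal{G}^P_{\mathbf 1}$: it sends a $P$-weighted graph $(G,\omega)$ to the one-critical object $\varphi_G$ with $\varphi_G(v)=G_v$, and a weight-preserving simplicial map $\alpha$ to the natural transformation whose $v$-th component is the restriction $\alpha|_{G_v}$. So it suffices to build a functor $\Xi_P:\mathcal{G}^P_{\mathbf 1}\to\bar{\mathcal{G}}_P$ going the other way and to check that the two composites are (naturally isomorphic to) the identities. On objects I would send $\varphi\in\mathcal{G}^P_{\mathbf 1}$ to the colimit of its inclusion diagram, i.e. the union $\bigcup_{v\in P}\varphi_v$, weighted by the birth function $\omega^\varphi(\sigma)=m_\sigma:=\min\{u\in P\mid \sigma\in\varphi_u\}$. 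This is the natural refinement of $\Psi_P$ once one uses that the structure maps are inclusions, so that the disjoint union collapses to the union along the canonical first appearance of each simplex.

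First I would check that $\Xi_P$ is well defined on objects. One-criticality is precisely what guarantees that $m_\sigma$ exists and is unique for every edge (and, by the same argument applied to $0$-simplices, for every vertex), so that $\omega^\varphi$ is a genuine single-valued function. I then need $\omega^\varphi$ to be a morphism of posets $(\bigcup_v\varphi_v,\subseteq)\to P$: if a vertex $x$ is a face of an edge $\tau=(x,y)$, then whenever $\tau$ is present both its endpoints are present, hence $m_x\le m_\tau$, which is exactly the required monotonicity. Thus $(\bigcup_v\varphi_v,\omega^\varphi)$ is a bona fide $P$-weighted graph.

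Next I would compute the two composites on objects, where the situation is clean. For $(G,\omega)\in\bar{\mathcal{G}}_P$ one has $\bigcup_v G_v=G$ and $m_\sigma=\min\{u\mid\omega(\sigma)\le u\}=\omega(\sigma)$, so $\Xi_P\circ\bar{\Phi}_P(G,\omega)=(G,\omega)$. Conversely, for $\varphi\in\mathcal{G}^P_{\mathbf 1}$ the $w$-th graph of $\bar{\Phi}_P\circ\Xi_P(\varphi)$ is $\{\sigma\mid m_\sigma\le w\}=\varphi_w$, so $\bar{\Phi}_P\circ\Xi_P(\varphi)=\varphi$. Hence on objects the two functors are mutually inverse, and the equivalence will follow once the morphism level is matched.

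The morphism level is where I expect the real work, and it is the main obstacle. I must show that $\Xi_P$ is well defined and functorial on arrows and that the assignments $\alpha\mapsto\bar{\Phi}_P(\alpha)$ and $\mu\mapsto\Xi_P(\mu)$ are mutually inverse bijections between the weight-preserving maps of $\bar{\mathcal{G}}_P$ and the morphisms of $\mathcal{G}^P_{\mathbf 1}$. The delicate point is that a natural transformation of one-critical objects induces on colimits a map that is a priori only weight-nonincreasing, so the crux is to use the precise morphism class of $\mathcal{G}^P_{\mathbf 1}$ to force the birth of the image of $\sigma$ to agree with the birth of $\sigma$; one-criticality enters here again, since the canonical first appearance is what pins the weight down. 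Once this compatibility is established, naturality of both constructions in their arguments is a routine diagram chase, and the proof concludes.
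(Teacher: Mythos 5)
Your object-level construction is exactly the one the paper uses: the quasi-inverse sends $\varphi\in\mathcal{G}^P_{\mathbf 1}$ to the union $G=\bigcup_{a\in P}\varphi(a)$ weighted by the birth function $\omega(\sigma)=m_\sigma$, and your verification that the two composites are the identity on objects is the paper's essential-surjectivity step in different clothing. The difference is purely organizational: the paper argues that $\Phi_P$ is full, faithful and essentially surjective, whereas you package the same data as an explicit quasi-inverse $\Xi_P$. Either route would be fine in principle.

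The problem is that you stop exactly where the content is. You write that the morphism level ``is the main obstacle,'' that the crux is to force the birth of $\mu(\sigma)$ to equal $m_\sigma$ rather than merely satisfy $m_{\mu(\sigma)}\le m_\sigma$, and then you assert that ``once this compatibility is established'' the rest is routine --- but you never establish it. This is a genuine gap, not a routine omission: naturality of $\mu:\varphi\to\psi$ only gives that $\mu_{m_\sigma}(\sigma)$ lies in $\psi(m_\sigma)$, i.e.\ that the birth of the image is $\le m_\sigma$, and without equality $\Xi_P(\mu)$ is not a morphism of $\bar{\mathcal{G}}_P$ at all, since those are required to preserve weights on the nose. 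The paper confronts this in its fullness step: given $\rho:\varphi_G\to\varphi_H$ it defines $\alpha((x,y))=\rho_{\omega((x,y))}((x,y))$ and claims $\theta(\alpha((x,y)))=\omega((x,y))$ because $\rho_{\omega((x,y))}$ lands in $\varphi_H(\omega((x,y)))=H_{\omega((x,y))}$. Note that even this observation directly yields only the inequality $\theta(\alpha((x,y)))\le\omega((x,y))$, so the equality itself requires a further argument or a restriction on the morphisms of $\mathcal{G}^P_{\mathbf 1}$; in any case a complete proof must supply this step explicitly, and your proposal defers it rather than doing it.
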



\begin{proof} It is a well known fact in category theory that a functor is an equivalence if and only if it is full, faithful and essentially surjective. Then to prove the equivalence of category we need to verify that $\Phi_P$ has these three properties.\\

Consider $(G,\omega),(H,\theta)\in{\bar{\SG}_P}$, and 
$\alpha\in\hom_{\bar{\SG}_P}((G,\omega),(H,\theta)).$ The functor $\Phi_P$ is essentially surjective if it is surjective on objects up to isomorphism. 
Let $\varphi\in{\SG^P_\mathbf{1}}$, then we can construct
 $(G,\omega)\in\bar{\SG}_P$ by $G:=\bigcup_{a\in P}\varphi(a)$ and $\omega((x,y))=m_{xy}$ (see \ref{min}).
 It follows that $\Phi_P((G,\omega))$ is such that, for all $u\in P,$ one has $\varphi_G(u)=\{x\in G|\;\omega(x)\le u\}=\bigcup_{a\in P; a\leq u}\varphi(a)\cong\varphi(u)$ by definition of $\varphi.$\\
 
The functor $\Phi_P$ is full if the map 
\[\Phi_P((G,\omega),(H,\theta)):\hom_{\bar{\SG}_P}((G,\omega),(H,\theta))\to\hom_{\mathcal{G}_\mathbf{1}^P}(\varphi_G,\varphi_H)\]
 is surjective for all 
 $(G,\omega),(H,\theta)\in\bar{\SG}_P$.\\
Consider a morphism $\rho:\varphi_G\to\varphi_H$ in $\mathcal{G}_\mathbf{1}^P$. 
Let $\alpha:E_G\to E_H$ be given by $\alpha((x,y))=\rho_{\omega(x,y)}(x,y),$ for every $(x,y)\in E_G.$
Then $\alpha$ is a morphism $(G,\omega)\to (H,\theta)$ in $\bar{\SG}_P,$ because, since 
$\rho_{\omega(x,y)}:\varphi_G(\omega(x,y))\to\varphi_H(\omega(x,y))$,
 we have that $\theta(\alpha((x,y)))= \omega(x,y)$.  It is clear that $\Phi_P(\alpha)$ is $\rho$ by the definitions of $\Phi_P$ and $\alpha.$
 
As last step, we prove that $\Phi_P$ is said faithful, i.e.  that the map 
\[\Phi_P((G,\omega),(H,\theta)):\hom_{\bar{\SG}_P}((G,\omega),(H,\theta))\to\hom_{\mathcal{G}_\mathbf{1}^P}(\varphi_G,\varphi_H)\] 
is injective for all $(G,\omega),(H,\theta)\in\bar{\SG}_P$.\\
Consider $\alpha,\beta\in\hom_{\bar{\SG}_P}((G,\omega),(H,\theta))$ such that $\Phi(\alpha)=\Phi(\beta).$
This means that $\Phi(\alpha)_v=\Phi(\beta)_v$ for all $v\in P$, but this implies that $\alpha|_{G_v}=\beta|_{G_v}$ for all $v\in P$, then $\alpha=\beta$.
\end{proof}

\subsection{Adjunctions}
Beside the equivalence in Th.\ref{isocat}, there are also some results on the relationships between the other categories involved. 
\begin{theorem}
$\bar{\Phi}_P$ is left adjoint of $\Psi_P$, that is \[\hom_{\bar{\mathcal{G}}_P}((X,\omega),\Psi_P(\varphi))\cong\hom_{\mathcal{G}^P}(\bar{\Phi}_P((X,\omega)), \varphi)\]
\end{theorem}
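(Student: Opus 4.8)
The plan is to exhibit the required isomorphism as a natural bijection of hom-sets, constructed explicitly, and then to read off the unit and counit by evaluating at identities. Throughout write $\varphi_X:=\bar{\Phi}_P(X,\omega)$, so that $\varphi_X(v)=X_v$ and $\varphi_X(u\le v)$ is the inclusion $X_u\hookrightarrow X_v$, and recall that $\Psi_P(\varphi)=(\coprod_{v\in P}\varphi_v,\omega^\varphi)$ with $\omega^\varphi|_{\varphi_v}=v$ and $(\omega^\varphi)^{-1}(u)=\varphi_u$.

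First I would define the forward map
\[\Theta:\hom_{\bar{\mathcal{G}}_P}((X,\omega),\Psi_P(\varphi))\to\hom_{\mathcal{G}^P}(\varphi_X,\varphi).\]
Given a weight-preserving $\beta:(X,\omega)\to\Psi_P(\varphi)$, weight-preservation forces $\beta(\sigma)\in\varphi_{\omega(\sigma)}$ for every face $\sigma$ of $X$ (this is exactly $\omega^\varphi(\beta(\sigma))=\omega(\sigma)$ together with $(\omega^\varphi)^{-1}(u)=\varphi_u$). For each $v\in P$ I set $\Theta(\beta)_v:=\rho_v:X_v\to\varphi_v$ by $\rho_v(\sigma):=\varphi(\omega(\sigma)\le v)(\beta(\sigma))$, which makes sense because $\sigma\in X_v$ means $\omega(\sigma)\le v$. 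The candidate inverse $\Xi$ sends a natural transformation $\rho:\varphi_X\to\varphi$ to the morphism $\beta:=\Xi(\rho)$ defined face-wise by $\beta(\sigma):=\rho_{\omega(\sigma)}(\sigma)\in\varphi_{\omega(\sigma)}$; since this lands in the $\omega(\sigma)$-th summand of $\coprod_v\varphi_v$, the map $\beta$ is weight-preserving by construction.

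Next I would check that the two composites are identities, which is the routine core. For $\Theta\circ\Xi$ one computes $(\Theta\Xi\rho)_v(\sigma)=\varphi(\omega(\sigma)\le v)\bigl(\rho_{\omega(\sigma)}(\sigma)\bigr)=\rho_v(\sigma)$, the last equality being naturality of $\rho$ along $\omega(\sigma)\le v$ (the inclusion $X_{\omega(\sigma)}\hookrightarrow X_v$ sends $\sigma$ to $\sigma$). For $\Xi\circ\Theta$ one computes $(\Xi\Theta\beta)(\sigma)=\varphi(\omega(\sigma)\le\omega(\sigma))(\beta(\sigma))=\beta(\sigma)$, since $\varphi$ sends the identity $\omega(\sigma)\le\omega(\sigma)$ to the identity. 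Finally I would verify naturality of $\Theta$ in both arguments: in $(X,\omega)$ by precomposition with a morphism of $\bar{\mathcal{G}}_P$, and in $\varphi$ by postcomposition with $\mu:\varphi\to\tau$, using that $\Psi_P(\mu)$ acts on the $v$-summand as $\mu_v$, so that $\Theta$ intertwines postcomposition by $\Psi_P(\mu)$ with postcomposition by $\mu$.

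The hard part will be verifying that $\Theta(\beta)$ and $\Xi(\rho)$ are honest morphisms of their respective categories, and not merely maps of the underlying data. On one side I must show each $\rho_v$ is a morphism in $\mathcal{G}$ and that the squares for $u\le v$ commute, i.e. that $\rho$ is a genuine natural transformation; on the other side I must show that $\Xi(\rho)$ respects the reflexive-graph structure of $\coprod_v\varphi_v$. This is precisely where the definition of $\Psi_P$ as a disjoint union interacts with weight-preservation and with the commuting squares of $\rho$: one must track how a face of $X$ and its boundary vertices are distributed among the summands indexed by their weights, and check compatibility with the transition maps $\varphi(u\le v)$. Once this structural compatibility is secured the remaining naturality checks are formal, and evaluating $\Theta$ and $\Xi$ at identities yields the unit $(X,\omega)\to\Psi_P\bar{\Phi}_P(X,\omega)$ and counit $\bar{\Phi}_P\Psi_P(\varphi)\to\varphi$ of the adjunction; alternatively, one could deduce the statement from the equivalence of Theorem~\ref{isocat} by identifying $\Psi_P$ with the transport along $\bar{\Phi}_P$ of a right adjoint to the inclusion $\mathcal{G}_{\mathbf 1}^P\hookrightarrow\mathcal{G}^P$.
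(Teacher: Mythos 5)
Your construction is essentially the paper's own proof: your $\Theta(\beta)_v(\sigma)=\varphi(\omega(\sigma)\le v)(\beta(\sigma))$ is exactly the paper's $\bar{\alpha}|_{X_u}(x)=\varphi_{\omega(x)u}(\alpha(x))$, and your inverse $\Xi$ is the factorization through the paper's unit $\pi(x)=(x,\omega(x))$, so the two arguments differ only in packaging (explicit hom-set bijection versus the universal property of the unit). The morphism-hood and naturality checks you defer are left equally implicit in the paper, so your proposal matches it in both substance and level of detail.
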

\begin{proof}
Let $\pi:(X,\omega)\to (\coprod_P X_u,\omega^{\varphi_X})$ given by $\pi(x)=(x,\omega(x))\in X_{\omega(x)}$, for every $x\in (X,\omega)$. This map is well defined and is actually a morphism in the category $\bar{\mathcal{G}}_P$ since $\omega^{\varphi_X}(\pi(x))=\omega^{\varphi_X}(\,(x,\omega(x))\,)=\omega(x).$\\
To prove the assumption we will show that,  for every $\alpha\in\hom_{\bar{\mathcal{G}}_P}((X,\omega),\Psi_P(\varphi)),$ there is a unique morphism in $\mathcal{G}^P$, $\bar{\alpha}:\bar{\Phi}_P((X,\omega))\to \varphi$ such that the following diagram commutes
\begin{equation}\label{adj1}
\xymatrix{(X,\omega) \ar[r]^(0.35){\pi}\ar @{->} @< 0pt>[d]_\alpha & \Psi_P(\bar{\Phi}_P((X,\omega))) \ar @{->} @< 2pt> [dl]^{\Psi_P(\bar{\alpha})}\\
\Psi_P(\varphi) &
}
\end{equation}

Let $\alpha\in\hom_{\bar{\mathcal{G}}_P}((X,\omega),(\coprod_P \varphi(u),\omega^\varphi))$, then $\alpha$ will be such that $\omega^\varphi(\alpha(x))=\omega(x)$,  for every $x\in (X,\omega)$. By construction of $\omega^\varphi$ we will have that $\alpha(x)\in \varphi(\omega(x))$, and with a little abuse of notation we will write $\alpha: x\mapsto (\alpha(x),\omega(x))$.

Let $\bar{\alpha}:\bar{\Phi}_P((X,\omega))\to \varphi$ be the morphism in $\mathcal{G}^P$ defined through $\bar{\alpha}|_{X_u}:X_u\to\varphi(u)$ with $\bar{\alpha}|_{X_u}(x)=\varphi_{\omega(x)u}(\alpha(x),\omega(x))$, where $\varphi_{\omega(x)u}=\varphi(\omega(x)\le u):\varphi(\omega(x))\to\varphi(u)$. \\

We still have to show that diagram \ref{adj1} commutes, i.e. $\Psi_P(\bar{\alpha})\circ\pi = \alpha$. Let $x$ be an element of $(X,\omega)$ with weight $\omega(x)$, then $\pi(x)=(x,\omega(x))\in X_{\omega(x)}$, so $\bar{\alpha}(\pi(x))=\varphi_{\omega(x)\omega(x)}(\alpha(x),\omega(x))=(\alpha(x),\omega(x))$. Follows that the diagram commutes and this proves the adjuction.\\
%
\end{proof}
There is another adjunction.


\begin{theorem}\label{adjiota}
$\Psi_P^\iota$ is left adjoint of $\Phi_P$.
\end{theorem}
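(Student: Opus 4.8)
The plan is to exhibit an adjunction

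\[\hom_{\mathcal{G}^P}(\Psi_P^\iota(\varphi),(H,\theta))\cong\hom_{\mathcal{G}_\iota^P}(\varphi,\Phi_P((H,\theta)))\]

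by constructing a unit natural transformation and verifying the associated universal property, mirroring the structure of the previous adjunction proof. Recall that $\Psi_P^\iota$ sends an inclusion-type persistent object $\varphi\in\mathcal{G}^P_\iota$ to the $P$-weighted graph $(\coprod_{v\in P}\varphi_v,\omega^\varphi)$, while $\Phi_P$ sends a $P$-weighted graph $(H,\theta)$ to the persistent object $\varphi_H$ with $\varphi_H(v)=H_v=\{x\in H\,|\,\theta(x)\le v\}$. Since $\varphi\in\mathcal{G}_\iota^P$ has inclusions as structure maps, each vertex or edge $x$ of $\coprod_v\varphi_v$ carries a well-defined weight $\omega^\varphi(x)=v$ recording the copy it came from, and the inclusion structure lets us identify, for $u\le v$, the image of $\varphi_u$ inside $\varphi_v$.

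First I would define the unit $\eta_\varphi:\varphi\to\Phi_P(\Psi_P^\iota(\varphi))$ in $\mathcal{G}_\iota^P$. For each $v\in P$ this is a map $\eta_{\varphi,v}:\varphi(v)\to(\Psi_P^\iota(\varphi))_v$, where the target is $\{x\in\coprod_u\varphi_u\,|\,\omega^\varphi(x)\le v\}=\coprod_{u\le v}\varphi_u$. Because $\varphi$ is an inclusion-type object, $\varphi(v)$ already contains the images of all $\varphi(u)$ for $u\le v$ as subcomplexes, so there is a natural comparison map sending $\varphi(v)$ into this disjoint union by recording for each simplex its minimal appearance level; one checks this is a simplicial map and is natural in $v$ and in $\varphi$. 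Then, given any morphism $\alpha:\Psi_P^\iota(\varphi)\to(H,\theta)$ in $\mathcal{G}_P$, I would produce the unique transpose $\bar\alpha:\varphi\to\Phi_P((H,\theta))=\varphi_H$ by setting $\bar\alpha_v=\Phi_P(\alpha)_v\circ\eta_{\varphi,v}$, and conversely, given $\beta:\varphi\to\varphi_H$, recover a morphism of $P$-weighted graphs on $\coprod_v\varphi_v$ component-wise by $\beta_v$ on the copy $\varphi_v$, checking that the weight condition $\theta(\beta_v(x))\le v$ holds precisely because $\beta_v$ lands in $\varphi_H(v)=H_v$.

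The two main verifications are that these assignments are mutually inverse (the universal triangle for $\eta_\varphi$ commutes and the transpose is unique) and that the bijection is natural in both variables. The uniqueness hinges on the fact that a morphism out of $\Psi_P^\iota(\varphi)=\coprod_v\varphi_v$ is determined by its restriction to each disjoint summand $\varphi_v$, together with the weight-preservation forcing the $v$-summand to map into $H_{\le v}$; this rigidity is exactly what pins down $\bar\alpha$ uniquely given $\alpha$.

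The hard part will be handling the distinction between the honest disjoint union $\coprod_{v}\varphi_v$ produced by $\Psi_P^\iota$ and the nested, overlapping subcomplexes of an inclusion-type $\varphi$: a simplex of $\varphi$ that persists across several levels $u\le v$ appears once in $\varphi(v)$ but as several distinct copies in the disjoint union, so the unit $\eta_\varphi$ is genuinely non-injective and one must track the minimal-weight bookkeeping carefully to confirm simpliciality and to verify that the transpose correspondence is well defined and natural. I expect the target category of $\Psi_P^\iota$ to in fact lie in $\bar{\mathcal{G}}_P$, and so care is needed to state the adjunction against the correct $\hom$-sets and to confirm the naturality squares commute on the nose rather than only up to the identifications $\coprod_{u\le v}\varphi_u\cong\varphi(v)$ used in the essential-surjectivity argument of Theorem \ref{isocat}.
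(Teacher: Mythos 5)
Your overall strategy --- exhibit a unit $\eta:1\to\Phi_P\Psi_P^\iota$ and transpose morphisms along it --- is the same as the paper's, which constructs the unit and the counit $\varepsilon$ explicitly (Lemmata \ref{eta} and \ref{eps}) and then checks the two triangle identities. The gap is in your definition of the unit. You send a simplex $x\in\varphi(v)$ to the copy of $x$ in the summand of $\coprod_{u\le v}\varphi(u)$ indexed by its \emph{minimal appearance level}. For a general $\varphi\in\mathcal{G}^P_\iota$ over a general poset $P$ that minimum need not exist or be unique: a simplex can first appear at two incomparable elements of $P$. The existence and uniqueness of such a minimum is precisely the one-critical condition (\ref{min}) cutting out the strictly smaller subcategory $\mathcal{G}^P_{\mathbf{1}}$, so your unit is simply not defined on all of $\mathcal{G}^P_\iota$, which is the category the theorem is about. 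The paper's unit is instead $\eta_{\varphi}(v):x\mapsto(x,v)$, i.e.\ $x$ goes to the copy in the summand of the \emph{current} level $v$; no minimality is invoked.

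Even on one-critical objects, where your map is defined, it breaks the adjunction: the triangle identity $\varepsilon_{\Psi_P^\iota(\varphi)}\circ\Psi_P^\iota(\eta_\varphi)=1_{\Psi_P^\iota(\varphi)}$ would compute as $(x,v)\mapsto((x,m_x),v)\mapsto(x,m_x)$, which is the identity only when $m_x=v$, whereas the paper's choice gives $(x,v)\mapsto((x,v),v)\mapsto(x,v)$ as required (display (\ref{eps_ver})); so the transposition you describe would not be a bijection of hom-sets. Relatedly, your remark that the unit is ``genuinely non-injective'' is backwards: each component $\eta_\varphi(v)$ is injective, and it is the counit $\varepsilon_{(G,\omega)}:(x,u)\mapsto x$ that collapses the several copies of a persisting simplex. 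If you replace the minimal-level bookkeeping by the current-level assignment $x\mapsto(x,v)$, the rest of your plan (component-wise transposition on the disjoint summands, uniqueness from the weight constraint) goes through essentially as in the paper.
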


In order to prove this theorem we need some technical lemmata.

\begin{lemma}\label{eps}
There is a natural transformation $\epsilon:\Psi^\iota_P\Phi_P\longrightarrow 1_{\mathcal{G}_P}$.
\end{lemma}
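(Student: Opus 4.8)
The plan is to define $\epsilon$ by the evident collapsing map and then verify the two required properties. First I would note that the composite $\Psi_P^\iota\Phi_P$ is a well-defined endofunctor of $\mathcal{G}_P$: by the remark preceding Theorem~\ref{isocat}, $\Phi_P$ factors through $\mathcal{G}_\mathbf{1}^P\subseteq\mathcal{G}_\iota^P$, so $\Psi_P^\iota$ may legitimately be applied to $\Phi_P(G,\omega)$. Unwinding the definitions of the two functors, for $(G,\omega)\in\mathcal{G}_P$ we have $\Phi_P(G,\omega)=\varphi_G$ with $\varphi_G(v)=G_v$, whence
\begin{equation*}
\Psi_P^\iota\Phi_P(G,\omega)=\Bigl(\coprod_{v\in P}G_v,\ \omega^{\varphi_G}\Bigr),
\end{equation*}
the disjoint union of all the sublevel graphs, in which a simplex lying in the $v$-th summand carries weight $v$.

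Next I would define the component $\epsilon_{(G,\omega)}$ to send the copy of $x\in G_v$ sitting in the $v$-th summand to $x\in G$ itself. The key point is that this is a morphism in $\mathcal{G}_P$. It is plainly induced by a simplicial map, since every edge of the $v$-th summand is an edge of $G_v\subseteq G$ and every self-loop maps to a self-loop. For the sublevel condition, the $v$-sublevel of $\coprod_{u}G_u$ consists exactly of the copies of those $x\in G_u$ with $u\le v$; for such an $x$ one has $\omega(x)\le u\le v$, so $x\in G_v$, and hence $\epsilon_{(G,\omega)}$ carries the $v$-sublevel of the source into $G_v$, as required. Note that $\epsilon_{(G,\omega)}$ typically lowers weights, so it is a morphism of $\mathcal{G}_P$ but not of $\bar{\mathcal{G}}_P$.

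Finally I would check naturality. Given $\alpha:(G,\omega)\to(H,\theta)$ in $\mathcal{G}_P$, the transformation $\Phi_P(\alpha)$ has components $\alpha|_{G_v}:G_v\to H_v$, and $\Psi_P^\iota$ turns this into the coproduct map sending the copy of $x$ in the $v$-th summand of $\coprod_u G_u$ to the copy of $\alpha(x)$ in the $v$-th summand of $\coprod_u H_u$. Chasing this copy of $x$ around the naturality square, both routes --- apply $\epsilon_{(G,\omega)}$ then $\alpha$, versus apply $\Psi_P^\iota\Phi_P(\alpha)$ then $\epsilon_{(H,\theta)}$ --- produce $\alpha(x)\in H$, so the square commutes and $\epsilon$ is natural.

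I expect the only step needing real care to be the sublevel verification for $\epsilon_{(G,\omega)}$, namely confirming that the collapsing map respects the filtration in the correct direction; this reduces to the monotonicity chain $\omega(x)\le u\le v$. Everything else is a routine diagram chase, once the structure of the disjoint union $\coprod_{v\in P}G_v$ and the action of $\Psi_P$ on morphisms are made explicit.
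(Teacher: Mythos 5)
Your proposal is correct and follows essentially the same route as the paper: both define the component $\epsilon_{(G,\omega)}$ on $\bigl(\coprod_{v\in P}G_v,\omega^{\varphi_G}\bigr)$ as the collapsing map $(x,v)\mapsto x$, justify that it is a morphism of $\mathcal{G}_P$ via the inequality $\omega(x)\le v$, and verify naturality by a direct diagram chase. Your explicit remarks that $\Phi_P$ lands in $\mathcal{G}_\iota^P$ (so the composite makes sense) and that $\epsilon$ fails to be a morphism of $\bar{\mathcal{G}}_P$ are useful elaborations of points the paper leaves implicit.
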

\begin{proof}
Consider $(G,\omega)\in \mathcal{G}_P$, then
\begin{equation}
\xymatrix{
{\mathcal{G}_P} \ar[r]^{\Phi_P} & {\mathcal{G}^P_\iota} \ar[r]^{\Psi_P^\iota} & {\mathcal{G}_P}\\
(G,\omega)\ar @{|->}[r]&\{G_v\}\ar @{|->}[r]&(\coprod_{v\in P}G_v,\omega^{\varphi_G})
}
\end{equation}
Define now $\varepsilon_{(G,\omega)}$ as follows:
\begin{equation}
\begin{array}{rcl}
\varepsilon_{(G,\omega)}:(\coprod_{v\in P}G_v,\omega^{\varphi_G})&\longrightarrow &(G,\omega)\\
(x,u)&\mapsto &x\\
\end{array}
\end{equation}
This map is well defined since $\omega^{\varphi_G}((x,u))=u\geq \omega(x)=\omega(\varepsilon_{(G,\omega)}(x,u))$.\\
Consider now $(F,\tau)$, and $\alpha:(G,\omega)\to(F,\tau)$ in $\mathcal{G}_P$, trivially the following diagram commutes:
\begin{equation}
\xymatrix{
(G,\omega) \ar[rr]^{\Psi_P^\iota\circ\Phi_P} \ar [d]^\alpha&& (\displaystyle\coprod_{v\in P}G_v,\omega^{\varphi_G})\ar[rr]^{\varepsilon_{(G,\omega)}}\ar[d]_{\Psi_P^\iota(\Phi_P(\alpha))}&&(G,\omega)\ar [d]^\alpha\\
(F,\tau) \ar[rr]_{\Psi_P^\iota\circ\Phi_P}&&(\displaystyle\coprod_{v\in P}F_v,\omega^{\varphi_F})\ar[rr]_{\varepsilon_{(F,\tau)}}&&(F,\tau) 
}
\end{equation}
$\varepsilon$ is the natural transformation we were searching for.
\end{proof}

\begin{lemma}\label{eta}
There is a natural trasformation $\eta:\Psi^\iota_P\Phi_P\longrightarrow 1_{\mathcal{G}^P_\iota}$.
\end{lemma}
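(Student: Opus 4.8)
The plan is to build $\eta$ in direct analogy with the counit $\varepsilon$ of Lemma \ref{eps}, but on the $\mathcal{G}^P_\iota$ side. Since the target of $\eta$ is $1_{\mathcal{G}^P_\iota}$, its source must be the endofunctor $\Phi_P\Psi^\iota_P$ of $\mathcal{G}^P_\iota$. For $\varphi\in\mathcal{G}^P_\iota$ we have $\Psi^\iota_P(\varphi)=(\coprod_{v\in P}\varphi(v),\omega^\varphi)$, and applying $\Phi_P$ gives the $P$-persistent object whose value at $u$ is
\[
(\Phi_P\Psi^\iota_P\varphi)(u)=\{z\in\textstyle\coprod_{v\in P}\varphi(v)\mid\omega^\varphi(z)\le u\}=\coprod_{v\le u}\varphi(v),
\]
with transition maps the evident coproduct inclusions. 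I would then define $\eta_\varphi$ levelwise by collapsing the label,
\[
\eta_{\varphi,u}\colon\coprod_{v\le u}\varphi(v)\longrightarrow\varphi(u),\qquad (x,v)\longmapsto\varphi(v\le u)(x),
\]
which is exactly the $\mathcal{G}^P_\iota$ counterpart of the map $(x,u)\mapsto x$ used to define $\varepsilon$.

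First I would check that each $\eta_{\varphi,u}$ is a well-defined simplicial map: a label-$v$ copy of a simplex of $\varphi(v)$ is sent to its image under the simplicial map $\varphi(v\le u)$, and on a tagged edge $((x,y),v)$ the endpoints $(x,v),(y,v)$ are sent to the endpoints of the image edge, so faces go to faces. Next I would verify that the family $(\eta_{\varphi,u})_{u\in P}$ is itself a morphism in $\mathcal{G}^P_\iota$, i.e.\ that it is natural in $u$: for $u\le u'$ the square formed by the coproduct inclusion $\coprod_{v\le u}\varphi(v)\hookrightarrow\coprod_{v\le u'}\varphi(v)$ on the source and the transition $\varphi(u\le u')$ on the target must commute. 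Both routes send a tagged simplex $(x,v)$ to $\varphi(v\le u')(x)$, so this reduces to the single functoriality identity $\varphi(u\le u')\circ\varphi(v\le u)=\varphi(v\le u')$. Finally, naturality in $\varphi$ is immediate: for a morphism $\mu\colon\varphi\to\tau$ the induced $\Phi_P\Psi^\iota_P(\mu)$ acts label-wise, and collapsing the label intertwines it with $\mu$.

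The point requiring the most care is the \emph{direction} of $\eta$, and it is here that I expect the main obstacle. The tempting alternative --- producing a map $\varphi(u)\to\coprod_{v\le u}\varphi(v)$ by placing each simplex in its top label $u$, so as to obtain a transformation $1_{\mathcal{G}^P_\iota}\to\Phi_P\Psi^\iota_P$ --- is not natural in $u$, because the transition $\varphi(u\le u')$ would relabel a simplex while the coproduct inclusion would not; such a map also fails to be simplicial as soon as a vertex is born strictly before an edge incident to it, forcing its two endpoints into different labels. The construction therefore only works in the collapse direction given above, and once that is fixed the sole verification with genuine content is the naturality square in $u$, whose commutativity I have reduced to the functoriality of $\varphi$; the rest is routine checking on vertices and edges.
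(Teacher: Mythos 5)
Your construction is internally correct, but it is not the transformation the paper constructs, and the difference is not cosmetic. The statement as printed is ill-typed (the source $\Psi^\iota_P\Phi_P$ is an endofunctor of $\mathcal{G}_P$ while the target lives on $\mathcal{G}^P_\iota$), and you and the paper resolve the typo in opposite directions. The paper's proof defines, for each $v\in P$, the component inclusion $\eta_\varphi(v)\colon\varphi(v)\to\coprod_{u\le v}\varphi(u)$, $x\mapsto (x,v)$, i.e.\ a transformation $1_{\mathcal{G}^P_\iota}\to\Phi_P\Psi^\iota_P$. That \emph{unit} direction is the one actually consumed later: the triangle identities in the proof of Theorem \ref{adjiota}, namely $\varepsilon_{\Psi_P^\iota(\varphi)}\circ\Psi_P^\iota(\eta_\varphi)=1$ and $\Phi_P(\varepsilon_{(G,\omega)})\circ\eta_{\Phi_P((G,\omega))}=1$, only typecheck if $\eta$ points from the identity into $\Phi_P\Psi^\iota_P$. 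Your map $(x,v)\mapsto\varphi(v\le u)(x)$ goes the other way, $\Phi_P\Psi^\iota_P\to 1_{\mathcal{G}^P_\iota}$; it would be the counit of the \emph{opposite} adjunction $\Phi_P\dashv\Psi^\iota_P$ and cannot be substituted into the unit--counit equations for the adjunction $\Psi^\iota_P\dashv\Phi_P$ that the paper claims. So, judged as a proof of the lemma in the role the paper assigns to it, your proposal establishes a different statement.

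That said, your analysis of the ``tempting alternative'' is substantially right and in fact exposes a genuine defect in the paper's own argument. One correction: each level $x\mapsto(x,v)$ of the paper's $\eta_\varphi$ is a perfectly good simplicial map (it is the inclusion of the $v$-labelled component, so both endpoints of an edge of $\varphi(v)$ receive the same label $v$; your objection about endpoints landing in different labels applies to the variant $x\mapsto(x,m_x)$, not to this formula). What does fail is exactly the naturality in $v$ that you identify: for $v\le v'$ and $x\in\varphi(v)$, one path around the square gives $(x,v)$ and the other gives $(x,v')$, which are distinct points of the disjoint union since the transition maps of $\Phi_P\Psi^\iota_P(\varphi)$ are the label-preserving subset inclusions. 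Hence the paper's $\eta_\varphi$ is not a morphism of $\mathcal{G}^P_\iota$ at all; the paper checks only naturality in $\varphi$ and never this square. The net situation is therefore: your natural transformation exists and is correctly verified, the paper's does not exist as written, and neither one yields the unit required by Theorem \ref{adjiota} without some repair (for instance, redefining $\Psi_P$ so that the transitions of $\Phi_P\Psi^\iota_P(\varphi)$ relabel, e.g.\ by taking a colimit or quotient rather than the plain coproduct).
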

\begin{proof}

Consider $\varphi\in \mathcal{G}^P_\iota$, $\Phi\circ\Psi(\varphi)=(\coprod_{u\le v} \phi(u),\subseteq)$.
\begin{equation}
\xymatrix{
{\mathcal{G}^P_\iota} \ar[r]^{\Psi_P^\iota} & {\mathcal{G}_P} \ar[r]^{\Phi_P} & {\mathcal{G}^P_\iota}\\
\{\varphi(v),\subseteq\}_{v\in P}\ar @{|->}[r]&(\coprod_{v\in P}\varphi(v),\omega^\varphi)\ar @{|->}[r]&\{\coprod_{u\le v}\varphi(u),\subseteq\}
}
\end{equation}
Define now $\eta_\varphi$ as follows:
\begin{equation}
\begin{array}{rcl}
\eta_\varphi:
\varphi(v)&\to&\coprod_{u\le v}\varphi(u)\\
x&\mapsto &(x,v)
\end{array}
\end{equation}
Consider now $\theta$, and $\alpha:\varphi\to\theta$ in $\mathcal{G}^P_\iota$, the following diagram commutes:
\begin{equation}
\xymatrix{
\varphi(v) \ar[r]^{\eta_\varphi} \ar [d]^{\alpha_v}& \displaystyle\coprod_{u\le v}\varphi(u) \ar [d]^{\coprod\alpha_u}\\
\theta(v) \ar[r]^{\eta_\theta} & \displaystyle\coprod_{u\le v}\theta(u)
}
\end{equation}
where for every $(x,w)\in \coprod_{u\le v}\varphi(u)$, $\coprod\alpha_u((x,w))=(\alpha(x),w)$.\\
$\eta$ is the natural transformation we were searching for.
\end{proof} 

\begin{proof}[Proof of Theorem \ref{adjiota}]
We will prove the unit-counit adjunction, with $\varepsilon$ and $\eta$ the natural transformations defined in Lemma \ref{eps}, and \ref{eta}.\\

To prove the adjunction we verify that the following compositions are the identity transformation of the respective categories.
\begin{equation}
\xymatrix{{\Phi_P} \ar '[d]
'[drr] ^{id_{\mathcal{G}_P}}
[rr] 
\ar[r]^(0.35){\eta\Phi}  
& \Phi_P\Psi_P^\iota\Phi_P \ar[r]^(0.6){\Phi\varepsilon} &\Phi_P\\
&&
}
\qquad
\xymatrix{{\Psi_P^\iota} \ar '[d]
'[drr] ^{id_{\mathcal{G}^P_\iota}}
[rr] 
\ar[r]^(0.35){\Psi\eta}& \Psi_P^\iota\Phi_P\Psi_P^\iota \ar[r]^(0.6){\varepsilon\Psi} &\Psi_P^\iota\\
&&
}
\end{equation}
which means that for each $(G,\omega)$ in $\mathcal{G}_P$ and each $\varphi$ in $\mathcal{G}^P$,
\begin{align}
1_{\Psi_P^\iota(\varphi)} &= \varepsilon_{\Psi_P^\iota(\varphi)}\circ \Psi_P^\iota(\eta_\varphi) \label{Psi_adj}\\
1_{\Phi_P((G,\omega))} &= \Phi_P(\varepsilon_{(G,\omega)})\circ\eta_{\Phi_P((G,\omega))} \label{Phi_adj}
\end{align}
We will start by verifying equation \ref{Psi_adj}. Let $\varphi\in\mathcal{G}^P_\iota$, we know that $\eta_\varphi:\varphi\longrightarrow\Phi_P\circ\Psi_P^\iota(\varphi)$ is a natural transformation defined for every $v\in P$ by 
\begin{equation}
\begin{array}{rrcl}
\eta_\varphi(v):&\varphi(v)&\longrightarrow &\Phi_P(\Psi_P^\iota(\varphi))(v)=\coprod_{u\le v}\varphi(u) \\
&x &\mapsto &(x,v)
\end{array}
\end{equation}
Then 
\[\Psi_P^\iota(\eta_\varphi):\Psi_P^\iota(\varphi)\to(\displaystyle\coprod_{v\in P} \Phi_P(\Psi_P^\iota(\varphi))(v),\omega^{ \Phi_P\Psi_P^\iota(\varphi)})=(\displaystyle\coprod_{v\in P} \coprod_{u\le v}\varphi(u),\omega^{ \Phi_P\Psi_P^\iota(\varphi)}),\] 
where $\omega^{ \Phi_P\Psi_P^\iota(\varphi)}|_{\coprod_{u\le v}\varphi(u)}=v$. From the definition of $\varepsilon$ we gave in Lemma \ref{eps} we deduce that 
\[\varepsilon_{\Psi_P^\iota(\varphi)}:\Psi_P^\iota\circ\Phi_P(\Psi_P^\iota(\varphi))\longrightarrow \Psi_P^\iota(\varphi).\] 

One has that $\Psi_P^\iota\circ\Phi_P(\Psi_P^\iota(\varphi))$ is the weighted graph $(\displaystyle\coprod_{v\in P}\Psi_P^\iota(\varphi)_v\;,\omega^{ \Phi_P\Psi_P^\iota(\varphi)})$, where $\Psi_P^\iota(\varphi)_v=\{x\in \Psi_P^\iota(\varphi) | \omega^\varphi(x)\le v\}=\displaystyle\coprod_{u\le v}\varphi(u)$, and $\omega^{ \Phi_P\Psi_P^\iota(\varphi)}|_{\coprod_{u\le v}\varphi(u)}=v$. 
\begin{equation}
\begin{array}{rrcl}
\varepsilon_{\Psi_P^\iota(\varphi)}:&(\displaystyle\coprod_{v\in P}\displaystyle\coprod_{u\le v}\varphi(u),\omega^{ \Phi_P\Psi_P^\iota(\varphi)})&\longrightarrow &\Psi_P^\iota(\varphi)\\
&((x,u),v)& \mapsto& (x,u)
\end{array}
\end{equation}
where $\Psi_P^\iota(\varphi)=(\displaystyle\coprod_{v\in P}\varphi(v), \omega^\varphi)$, with $\omega^\varphi|_{\varphi(v)}=v$. 

Then $\varepsilon_{\Psi_P^\iota(\varphi)}\circ \Psi_P^\iota(\eta_\varphi)=1_{\Psi_P^\iota(\varphi)}$ as the following shows:
\begin{equation}\label{eps_ver}
\begin{array}{rcccl}
(\displaystyle\coprod_{v\in P}\varphi(v), \omega^\varphi)&\xrightarrow{\Psi(\eta_\varphi)}&(\displaystyle\coprod_{v\in P}\coprod_{u\le v}\varphi(u),\omega^{ \Phi_P\Psi_P^\iota(\varphi)})&\xrightarrow{\varepsilon_{\Psi_P^\iota(\varphi)}}&(\displaystyle\coprod_{v\in P}\varphi(v), \omega^\varphi)\\
(x,v) &\mapsto&((x,v),v)& \mapsto& (x,v)
\end{array}
\end{equation}
\noindent We verify now identity \ref{Phi_adj}. Consider $(G,\omega)\in\mathcal{G}_P$, we have that 
\[\eta_{\Phi_P((G,\omega))}:\Phi_P((G,\omega))\to \Phi_P\circ\Psi_P^\iota(\Phi((G,\omega)))\] 
where $\Phi_P((G,\omega))(v)=G_v$, with $G_v=\{x\in G|\omega(x)\le v\}$. For every $v\in P$, we find that $\eta_{\Phi_P((G,\omega))}$ is determined by:
\begin{equation}
\begin{array}{rrcl}
\eta_{\Phi_P((G,\omega))}(v):&G_v &\longrightarrow &\displaystyle\coprod_{u\le v}G_u\\
&x &\mapsto &(x,v)
\end{array}
\end{equation}

 Considering that $\varepsilon_{(G,\omega)}:(\displaystyle\coprod_{v\in P} G_v,\omega^{\varphi_G}) \mapsto (G,\omega)$, where $\omega^{\varphi_G}|_{G_v}=v$. We have that $\Phi_P(\varepsilon_{(G,\omega)})$ is defined for every $v\in P$:
 \begin{equation}
 \begin{array}{rrcl}
 \Phi_P(\varepsilon_{(G,\omega)})(v):&\Phi_P(\Psi_P^\iota\circ\Phi_P((G,\omega)))(v)&\longrightarrow&\Phi_P((G,\omega))(v)\\
 \end{array}
 \end{equation}
 where 
 \[\Phi_P(\Psi_P^\iota\circ\Phi_P((G,\omega)))(v)=\{(x,u)\in \displaystyle\coprod_{v\in P} G_v \;\mbox{s.t.}\; \omega^{\varphi_G}((x,u))=u\le v\}=\displaystyle\coprod_{u\le v} G_u\] and $\Phi_P((G,\omega))(v)=G_v.$ 
 This gives the following natural transformation:
 
\begin{equation}
\begin{array}{rcccl}
G_v &\xrightarrow{\eta_{\Phi_P((G,\omega))}(v)}&\displaystyle\coprod_{u\le v}G_u &\xrightarrow{\Phi_P(\varepsilon_{(G,\omega)})(v)}&G_v\\
x &\xmapsto{\phantom{(G,A)}}&(x,v)&\xmapsto{\phantom{(G,A)}}& x
\end{array}
\end{equation} 
which proves that $\Phi_P(\varepsilon_{(G,\omega)})\circ\eta_{\Phi_P((G,\omega))}=1_{\Phi_P((G,\omega))}$.
\end{proof}

\subsection{Conclusions and application to homology}

\begin{definition}
Let $\tau\in\T^{P}$, the composition $\HHS_i\circ\tau\in\mathrm{Vect}_k^P$ will be called the $i^{\mathrm{th}}$ $P$-persistent homology of $\tau\in\T^P$.
\end{definition}

\noindent We have the following result which states that for any $P-$persistent object on the category of finite topological space, there is a $P-$weighted graph having the same $P-$persistent homology.

\begin{proposition}\label{topograph}
Let $\tau\in\T^{P},$ then there is $\theta\in\bar{\SG}^P $ such that
\begin{equation}
\HH_i^S\circ\tau\cong\HH_i\circ\cl\circ\theta
\end{equation}
as functors.
\end{proposition}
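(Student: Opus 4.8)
The plan is to push $\tau$ through the chain of functors in \eqref{dicafu}, replacing the finite spaces first by posets, then by their order complexes, then by the $1$-skeleta of those complexes, checking at each stage that the relevant homology is preserved and that every comparison is \emph{natural in the $P$-variable} so that it upgrades to an isomorphism of functors $P\to\mathrm{Vect}_k$.

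First I would remove the $\mathrm{T_0}$ hypothesis. Applying the Kolmogorov quotient levelwise sends $\tau\in\T^{P}$ to a functor $\bar\tau:P\to\Tz\cong\PC$, and by Proposition \ref{kolmo} each quotient map $\tau(v)\to\bar\tau(v)$ is a homotopy equivalence. These maps assemble into a natural transformation $\tau\Rightarrow\bar\tau$ of functors $P\to\T$, so applying $\HHS_i$ (which is invariant under homotopy equivalence) yields a levelwise isomorphism, that is a natural isomorphism $\HHS_i\circ\tau\cong\HHS_i\circ\bar\tau$. Hence we may assume $\tau$ takes values in finite $\mathrm{T_0}$-spaces, i.e.\ in posets.

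Next I would pass to the order complex by composing with $\Ord:\PC\to\sss$, obtaining $\Ord\circ\bar\tau:P\to\sss$. The essential ingredient is McCord's theorem: for a finite $\mathrm{T_0}$-space $X$ the geometric realization of $\Ord(X)$ is weakly homotopy equivalent to $X$, and this comparison map is natural in $X$ (see \cite{Barmak}). Since singular homology is a weak-homotopy invariant, the simplicial homology $\HH_i(\Ord(\bar\tau(v)))$ is naturally isomorphic to $\HHS_i(\bar\tau(v))$, whence $\HHS_i\circ\bar\tau\cong\HH_i\circ\Ord\circ\bar\tau$ as functors.

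Finally I would descend from complexes to graphs. By Remark \ref{**} each $\Ord(\bar\tau(v))$ is a flag complex, so $\cl\circ\gm$ acts as the identity on it; setting $\theta:=\gm\circ\Ord\circ\bar\tau$, a $P$-persistent graph, gives $\cl\circ\theta=\Ord\circ\bar\tau$ and therefore $\HH_i\circ\cl\circ\theta\cong\HHS_i\circ\tau$, as required. Repackaging $\theta$ through the equivalence $\SG^P_{\mathbf 1}\simeq\bar{\SG}_P$ of Theorem \ref{isocat} (equivalently through $\Psi_P$) then exhibits it as the $P$-weighted graph promised in the abstract. I expect the one delicate point to be exactly this naturality in $P$: each homology isomorphism is classical, but the conclusion is an isomorphism of \emph{functors}, so the argument must invoke the naturality of the Kolmogorov quotient and of the McCord map at the level of space-valued functors before any homology is taken; the remaining facts (order complexes are flag, $\cl\circ\gm=\mathrm{id}_{\mathcal F}$) are formal and already recorded above.
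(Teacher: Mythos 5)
Your proposal is correct and follows essentially the same route as the paper: the paper's proof simply invokes the commutativity of diagram (\ref{****}) — which is established in the appendix via the Kolmogorov quotient, McCord's theorem, and the fact that order complexes are flag complexes — applies $(-)^P$ levelwise, and sets $\theta = k_1\circ\Ord\circ\tau$, exactly your construction. Your explicit attention to naturality in the $P$-variable is a welcome elaboration of a point the paper leaves implicit, but it is not a different argument.
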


\begin{proof}
The commutativity of (\ref{****})  implies that the following diagram is commutative:
\begin{equation}\label{persistent}
\xymatrix{
\T^P\ar[r]&({\Tz})^P \ar[r]^{{(\HHS_i)}^P} \ar[d]_{{( k_1\circ\Ord)}^P}&{\mathrm{Vect}_k^P}\\
&\SG^P \ar @{->} @< 0pt>[ur]_{{(\HH_i\circ\cl)}^P}
}
\end{equation}
Therefore the statement holds with $\theta= k_1\circ\Ord\circ\tau.$
\end{proof}
The above result implies that $P$-persistent homology of finite spaces can be computed as $P$-persistent homology of graphs. We have then our main result on $P-$persistent homology.

\begin{theorem}\label{onegraph}
Let $\tau\in\T^P$ be a $P$-filtration of topological spaces such that $\tau_{ab}: X_a\to X_b$ is injective for all $a,b\in P$ with $a\leq b,$\\
Then exists a weighted graph $(G,\omega)\in\bar{\SG}_P$ such that $\HH_i^S\circ\tau\cong\HH_i(G,\omega)$.
\end{theorem}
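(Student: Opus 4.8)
The plan is to feed the given $\tau$ through the reduction already established in Proposition \ref{topograph} and then land inside the equivalence of Theorem \ref{isocat}. First I would apply Proposition \ref{topograph} verbatim: it supplies the persistent graph $\theta := k_1\circ\Ord\circ\tau\in\SG^P$ together with the natural isomorphism $\HH_i^S\circ\tau\cong\HH_i\circ\cl\circ\theta$. Reading $\HH_i(G,\omega)$ as the persistent homology $\HH_i\circ\cl\circ\Phi_P(G,\omega)$ of the clique complex of the weighted graph (as in the abstract), this reduces the theorem to realizing $\theta$, up to isomorphism of persistent graphs, as $\Phi_P(G,\omega)$ for a single $(G,\omega)\in\bar{\SG}_P$: once $\Phi_P(G,\omega)\cong\theta$, one gets $\HH_i(G,\omega)\cong\HH_i\circ\cl\circ\theta\cong\HH_i^S\circ\tau$.

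Second, I would use the injectivity hypothesis to place $\theta$ in the correct subcategory. Since $\Ord$ sends injective order-preserving maps to injective simplicial maps and $ k_1$ preserves injectivity, every transition $\theta_{ab}= k_1\Ord(\tau_{ab})$ is a monomorphism of graphs; after replacing $\theta$ by an isomorphic diagram we may take all $\theta_{ab}$ to be inclusions, so $\theta\in\SG^P_\iota$. Concretely $\theta$ is then a $P$-indexed family of subgraphs of the total graph $G_0=\bigcup_{a\in P}\theta(a)= k_1\Ord(\mathrm{colim}\,\tau)$, with each cell carrying a well-defined birth set inside $P$.

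The decisive step is to upgrade $\theta\in\SG^P_\iota$ to $\theta\in\SG^P_\mathbf{1}$, that is, to verify one-criticality: for every edge (and vertex) $x$ of $G_0$ the birth set $\{u\in P : x\in\theta(u)\}$ should possess a \emph{unique} minimal element $m_x$. Granting this, the essential-surjectivity construction in the proof of Theorem \ref{isocat} produces exactly the weighted graph $(G,\omega)\in\bar{\SG}_P$ with $G=G_0$ and $\omega(x)=m_x$, while the fullness and faithfulness of $\bar{\Phi}_P$ proved there yield $\Phi_P(G,\omega)\cong\theta$; combined with the first step this closes the argument.

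I expect the one-criticality verification to be the genuine obstacle. Injectivity guarantees that each birth set is an up-set of $P$, but in a general finite poset an up-set may have several incomparable minimal elements, so $m_x$ need not exist (this already occurs for a constant filtration $X_a=X_b=X_c=\{p<q\}$ over the poset with two minimal elements $a,b$ below a common top $c$). I would therefore either (a) record the hypothesis on $P$ under which principal birth up-sets are automatic, for instance $P$ a lattice or filtrations arising as sublevel sets of a single weight, so that $m_x$ is forced to exist; or (b) pre-process $\theta$ by splitting each multi-minimal cell into parallel copies labelled by its distinct minimal weights, and then check that this substitution leaves $\HH_i\circ\cl$ unchanged. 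Showing that such cell-splitting is homologically transparent is where the real work would lie.
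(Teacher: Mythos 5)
Your proposal follows exactly the route the paper takes: its entire proof of Theorem~\ref{onegraph} is the one sentence ``It follows from Prop.~\ref{topograph} and Th.~\ref{isocat}.'' You have simply unpacked what that sentence must mean: Proposition~\ref{topograph} yields $\theta=\gm\circ\Ord\circ\tau$ with $\HHS_i\circ\tau\cong\HH_i\circ\cl\circ\theta$, injectivity of the $\tau_{ab}$ lets one replace $\theta$ by an isomorphic diagram of inclusions in $\SG^P_\iota$, and then the essential surjectivity half of Theorem~\ref{isocat} is supposed to produce $(G,\omega)\in\bar{\SG}_P$ with $\Phi_P(G,\omega)\cong\theta$. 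Up to that point your reconstruction is faithful and correct.

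The obstacle you isolate --- that Theorem~\ref{isocat} only applies to \emph{one-critical} diagrams, i.e.\ those in $\SG^P_{\mathbf{1}}$, whereas injectivity only places $\theta$ in the larger category $\SG^P_\iota$ --- is genuine, and the paper does not address it: its own Remark after the definition of one-criticality makes clear that the essential image of $\Phi_P$ consists precisely of diagrams in which every cell has a principal birth up-set, and the construction $\omega(x)=m_x$ in the proof of Theorem~\ref{isocat} is undefined when the birth set has two incomparable minimal elements. Your counterexample (a constant, hence injective, filtration over the poset with two incomparable minimal elements below a common top) is valid and shows that for a general finite poset $P$ the conclusion cannot be extracted from Propositions~\ref{topograph} and Theorem~\ref{isocat} alone. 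So the missing step you point to is not a defect of your write-up relative to the paper; it is a gap in the paper's own two-line proof. To make the statement correct one must supply one of the repairs you sketch --- restrict to posets (or filtrations) for which birth up-sets are automatically principal, e.g.\ totally ordered $P$ as in the motivating weight filtrations, or carry out the cell-splitting construction and verify it does not change $\HH_i\circ\cl$ --- and neither is done in the text. Your proposal is therefore as complete as the published argument, with the virtue of saying explicitly where the remaining work lies.
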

\begin{proof}
It follows from Prop.\ref{topograph} and Th.\ref{isocat}.
\end{proof}

\begin{remark}
Let us consider a metric space $(X,d)$, since $X$ is $T_1$, then it has discrete topology. Therefore $\HHS_0$ is equal to $k^X$, and $\HHS_i=0$ for all $i>0$.\\ 
It is customary to associate to $(X,d)$ a nice simplicial complex, namely the Vietoris-Rips complex $X_\epsilon$. which is explicitly defined as $\cl(D_\epsilon(X))$ where $D_\epsilon$ is the graph with $V=X$, and $(x,y)\in E$ if and only if $d(x,y)\leq\epsilon$.\\
Therefore this approach is already included in our analysis.
\end{remark}



\section{Appendix on Homotopy and homology}\label{homsec}
\subsection{Homology}
The basic idea behind algebraic topology is to functorially attach algebraic objects to topological spaces in order to discern their properties. 
Homology theory does so by introducing functors from the category of topological spaces (or some related category) and continuous maps to the category of modules over a commutative base ring, such that these modules are topological invariants.\\
We will first introduce homology over simplicial complexes, which are our main setting, and then we will proceed to define it over general topological spaces.
\subsection{Simplicial homology}
Fixed a field $k$, in the following, by vector space we intend a $k-$vector space. \\
Given a simplicial complex $\Sigma$ of dimension $d$,  for $0\leq n \leq d$ consider the vector spaces $\CC_n:=\CC_n(\Sigma)$ with basis the set of $n$-faces in $\Sigma$. Elements in $\CC_n$ are called $n$\textit{-chains}.\\

The linear maps sending a $n$-face to the alternate sum of it's $(n-1)$-faces are called \textit{boundaries} and share the property $\partial_{n-1}\circ\partial_n=0$.

\begin{eqnarray*}
\partial_n: \CC_n & \longrightarrow & \CC_{n-1}\\
\text{[}p_{0},\ldots, p_{n}\text{]} & \rightarrow & \sum_{i=0}^n(-1)^i\text{[}p_{0},
\ldots, p_{i-1}, p_{i+1},\ldots, p_{n}\text{]}.
\end{eqnarray*}

The subspace $\ker\partial_n$ of $\CC_n$ is called the vector space of $n$-\textit{cycles} and denoted by $\ZZ_n:=\ZZ_n(\Sigma)$.
The subspace  $\mathrm{Im}\,\partial_{n+1}$ of $\CC_n$, is called the vector space of $n$-\textit{boundaries} and denoted by $\BB_n:=\BB_n(\Sigma)$.
\begin{remark}
From $\partial_{n-1}\circ\partial_n=0$ it follows that $\BB_n\subseteq \ZZ_n$ for all $n$.
\end{remark}
\begin{definition}
The $n-$th simplicial homology space of $\Sigma$, with coefficients in $k$, is the vector space $\HH_n:=\HH_n(\Sigma):=\ZZ_n/\BB_n$.
We denote by $\beta_n:=\beta_n(\Sigma)$ the rank of  $\HH_n:$ it is usually called the $n$-th Betti number of $\Sigma$. 
\end{definition}

The first Betti numbers of $\Sigma$ have an easy intuitive meaning: the $0$-th Betti number is the number of connected components of $\Sigma$, the first Betti number is the number of two dimensional (poligonal) holes, the third Betti number is the number of three dimensional holes (convex polyhedron).

\begin{remark}
It easy to check that $\CC_n,\ZZ_n,\BB_n$ and, therefore, $\HH_n$ are all functors $\sss\to \mathrm{Vect}_k$, where $\mathrm{Vect}_k$ denotes the category of vector spaces and linear mappings.
\end{remark}

There is plenty of literature on homology and in particular on simplicial homology, we refer the interested reader to \cite{Simp}. In particular, one can find thereby, the proof of the following.
\begin{proposition}
The functors $\HH_i$ are invariants by homeomorphism and homotopy type.

\end{proposition}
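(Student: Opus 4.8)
The plan is to factor the invariance of the simplicial homology functors $\HH_i$ through \emph{singular} homology, which carries homotopy invariance essentially by construction. Throughout, write $\HHS_i$ for the singular homology functor on topological spaces and $|\Sigma|$ for the geometric realization of a simplicial complex $\Sigma$; the target is to show that $\HH_i(\Sigma)$ depends only on the homeomorphism (indeed homotopy) type of $|\Sigma|$.

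First I would record that singular homology $\HHS_i$ is itself a homotopy invariant: it is a functor on topological spaces, and any two homotopic maps induce the same map on $\HHS_i$. The standard mechanism is the prism (chain-homotopy) operator, which produces a natural chain homotopy between the singular chain maps induced by two homotopic maps; passing to homology forces the induced maps to agree, so any homotopy equivalence induces isomorphisms on all $\HHS_i$. Since a homeomorphism is in particular a homotopy equivalence, invariance under homeomorphism will be subsumed once invariance under homotopy type is in hand.

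The crux is the comparison theorem: for every finite simplicial complex $\Sigma$ there should be an isomorphism $\HH_i(\Sigma)\cong\HHS_i(|\Sigma|)$, natural in $\Sigma$. I would prove this by induction on the skeleta $\Sigma_n$ of (\ref{ale}). The inclusion of simplicial chains into singular chains is a chain map, hence induces a map on homology; to see it is an isomorphism one compares the long exact sequences of the pairs $(\Sigma_n,\Sigma_{n-1})$ on the simplicial side and $(|\Sigma_n|,|\Sigma_{n-1}|)$ on the singular side. In both theories the relative term is free on the $n$-faces and concentrated in degree $n$: simplicially this is immediate from the quotient chain complex, while for singular homology it follows from excision together with $|\Sigma_n|/|\Sigma_{n-1}|\cong\bigvee S^n$, one sphere per $n$-face. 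The five lemma then propagates the isomorphism from skeleton to skeleton, and the induction terminates because $\Sigma$ is finite-dimensional. A more streamlined alternative is the method of acyclic models (the acyclic-carrier theorem), which directly produces the natural chain-homotopy equivalence between the simplicial and singular chain complexes and yields the same conclusion.

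Finally I would assemble the three ingredients. Given simplicial complexes $\Sigma,\Sigma'$ with $|\Sigma|$ homotopy equivalent to $|\Sigma'|$ (in particular homeomorphic), pick a homotopy equivalence $f\colon|\Sigma|\to|\Sigma'|$; by the first step it induces isomorphisms $\HHS_i(|\Sigma|)\cong\HHS_i(|\Sigma'|)$. Composing with the natural comparison isomorphisms of the second step gives $\HH_i(\Sigma)\cong\HH_i(\Sigma')$, which is precisely the assertion. The main obstacle is the comparison theorem of the third paragraph, since relating a purely combinatorial object to the topological singular theory is exactly where the genuine work lies. I would deliberately avoid basing the argument on a common subdivision of $\Sigma$ and $\Sigma'$: for homeomorphism invariance that route is blocked by the failure of the Hauptvermutung, whereas the singular-homology intermediary sidesteps the issue entirely.
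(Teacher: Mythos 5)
Your argument is correct and is the standard one: homotopy invariance of singular homology via the prism operator, plus the simplicial--singular comparison isomorphism established by skeletal induction (or acyclic models), assembled to give invariance of $\HH_i$ under homeomorphism and homotopy equivalence of realizations. The paper gives no proof of its own --- it simply defers to Munkres --- and your route is essentially the one carried out in that reference, including the correct observation that a common-subdivision argument would not suffice for homeomorphism invariance.
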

%
\begin{definition}
Let $G\in \SG$ be a graph. We now define as the homology space of $G$,  \[H_i(G):=H_i(\cl(G))\].
\end{definition}
\begin{proposition}
Let $\Sigma$ be a simplicial complex. Then, there exists a graph $G\in\SG$ such that $H_i(\Sigma)=H_i(G)$.
\end{proposition}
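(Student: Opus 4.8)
The plan is to reduce to the flag-complex case by passing to the barycentric subdivision, where the clique functor reconstructs the complex exactly. The obstruction to simply taking $G=\gm(\Sigma)$ and forming $\cl(G)$ is that a general $\Sigma$ need not be a flag complex: as the earlier remark shows, $\cl(\gm(\Sigma))$ can acquire extra faces (for instance, filling in an empty triangle), so in general $H_i(\cl(\gm(\Sigma)))\neq H_i(\Sigma)$. The key observation that removes this obstruction is Remark \ref{**}: for every simplicial complex $\Sigma$, the barycentric subdivision $\Ord(\Sigma)$ is a flag complex.

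First I would set $G:=\gm(\Ord(\Sigma))$, the reflexive graph underlying the $1$-skeleton of the barycentric subdivision. Since $\Ord(\Sigma)$ is a flag complex, by the defining property of the subcategory $\mathcal{F}$ we have $\cl(\gm(\Ord(\Sigma)))=\Ord(\Sigma)$, that is $\cl(G)=\Ord(\Sigma)$; this is precisely the isomorphism $\mathcal{G}\simeq\mathcal{F}$ applied to this object. Consequently, by the definition $H_i(G):=H_i(\cl(G))$, I obtain $H_i(G)=H_i(\Ord(\Sigma))$.

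Next I would invoke the fact recorded just after Definition \ref{alexS}, namely that $\Sigma$ and $\Ord(\Sigma)$, endowed with the Alexandrov topology, are weakly homotopy equivalent. Since the simplicial homology functors $H_i$ are invariant under homotopy type (and, more generally, under weak homotopy equivalence), this yields $H_i(\Ord(\Sigma))\cong H_i(\Sigma)$. Chaining the two relations gives $H_i(G)\cong H_i(\Sigma)$, as required.

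There is essentially no hard computational step here; the only point requiring care is the bookkeeping that $\cl$ genuinely inverts $\gm$ on flag complexes, which is immediate from the defining equation $\Sigma'=\cl(\gm(\Sigma'))$ for $\Sigma'\in\mathcal{F}$ together with the fact that $\Ord(\Sigma)$ lies in $\mathcal{F}$. The conceptual content is entirely carried by the two imported facts, that barycentric subdivision produces a flag complex and that it preserves homology, so I expect the main subtlety to be purely one of correctly citing these two inputs rather than any genuine difficulty.
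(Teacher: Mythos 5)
Your proposal is correct and follows exactly the paper's route: the paper's own proof also takes $G=\gm(\Ord(\pi(\Sigma)))$, the $1$-skeleton of the barycentric subdivision, and relies on Remark \ref{**} (barycentric subdivisions are flag complexes) together with the weak homotopy equivalence of $\Sigma$ and $\Ord(\Sigma)$. You have merely spelled out the steps the paper leaves implicit.
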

\begin{proof}
The proof is a consequence of Remark \ref{alexS} and \ref{**}. It is sufficient to consider as $G= k_1(\Ord(\pi(\Sigma)))$, the 1-skeleton of the barycentric subdivision of $\Sigma$, which is a { flag} complex.
\end{proof}
\subsection{Singular homology}
Simplicial homology has an analogous for general topological spaces, namely \textit{singular homology}, whose definition and properties we briefly recall now. Although we confine ourself into the category of finite topological spaces, the following definition remains valid for arbitrary topological spaces.
We address the interested reader to \cite{Hatcher, Simp} for a thorough treatise on these topics.

Let $X\in\T$ be a topological space, the chain spaces $\CC_n$ are in this case replaced by the vector spaces $\CC_n^S$ freely generated by the set of all continuous functions from the geometric realization of the standard n-simplex $\Delta^n$ to $X.$ \\
The boundaries are then defined in the following way thus making $(C_n^S,\partial_n^S)$ a chain complex.\\
\begin{definition}
Let $\sigma$ be a generator of $\CC_n$, i.e. a continuous function from $\Delta^n\to X$. Then the \textit{boundary homomorphism} $\partial_n^S$ can be constructed in the following way:
\[
\partial_n^S(\sigma)=\sum_i^n \sigma|_{[v_0,\dots,v_{i-1},v_{i+1},\dots,v_n]}
\]
where $\sigma|_{[v_0,\dots,v_{i-1},v_{i+1},\dots,v_n]}$ is the restriction of $\sigma$ to 
$[v_0,\dots,v_{i-1},v_{i+1},\dots,v_n].$
\end{definition}
It is easy to verify that $\partial_n^S\circ\partial_{n+1}^S=0$, thus we can define the homology spaces as we did for simplicial homology. We will denote the $i^{th}$ \textit{singular homology space} by $\HHS_i(X)$. For general nonsense it is easy to check that $H_i^S$ gives a functor $\T\to \mathrm{Vect}_k$.

\begin{theorem}[Theorem 2.27 in \cite{Hatcher}]\label{hom}
For any simplicial complex $\Sigma$, the singular homology groups are isomorphic to the simplicial homology groups.
\[
\forall i\in\N\qquad\HHS_i(\Sigma)\cong\HH_i(\Sigma)
\]
\end{theorem}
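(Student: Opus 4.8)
The plan is to construct an explicit natural chain map from simplicial chains to singular chains, show it induces the comparison morphism on homology, and then prove that comparison is an isomorphism by induction on skeleta, using the long exact sequences of a pair together with the Five Lemma. Throughout I read $\HHS_i(\Sigma)$ as the singular homology of the geometric realization $|\Sigma|$, which agrees with the singular homology of the associated finite space by the weak homotopy equivalence recalled after Definition \ref{alexS}. Since the paper works with finite, hence finite-dimensional, complexes, the induction terminates after finitely many steps and no direct-limit argument is required.

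First I would define the comparison map. For each ordered $n$-face $[p_0,\dots,p_n]$ of $\Sigma$ let $\sigma_{[p_0,\dots,p_n]}\colon\Delta^n\to|\Sigma|$ be its \emph{characteristic map}, i.e.\ the affine map carrying the $i$-th vertex of the standard simplex to $p_i$. Extending linearly gives a map $\Phi\colon\CC_n(\Sigma)\to\CC_n^S(|\Sigma|)$, and comparing the simplicial boundary formula with the singular one $\partial_n^S$ shows $\partial_n^S\circ\Phi=\Phi\circ\partial_n$. Thus $\Phi$ is a chain map and induces $\Phi_*\colon\HH_i(\Sigma)\to\HHS_i(|\Sigma|)$; the goal is to prove $\Phi_*$ is an isomorphism for every $i$.

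Next I would set up the skeletal induction. Write $\Sigma^0\subseteq\Sigma^1\subseteq\cdots\subseteq\Sigma^d=\Sigma$ for the skeletal filtration of (\ref{ale}), where $d=\dim\Sigma<\infty$, with the convention $\Sigma^{-1}=\varnothing$. I would prove the stronger relative statement that $\Phi$ induces an isomorphism $\HH_i(\Sigma^n,\Sigma^{n-1})\cong\HHS_i(|\Sigma^n|,|\Sigma^{n-1}|)$ for every $n$ and $i$, and then compare the long exact sequences of the pairs $(\Sigma^n,\Sigma^{n-1})$. The relative simplicial side is immediate from the definitions: the quotient complex $\CC_*(\Sigma^n)/\CC_*(\Sigma^{n-1})$ is concentrated in degree $n$ and free on the $n$-faces, so $\HH_i(\Sigma^n,\Sigma^{n-1})$ is free on the $n$-faces when $i=n$ and vanishes otherwise.

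The main obstacle is the matching computation on the singular side. I would invoke that $(|\Sigma^n|,|\Sigma^{n-1}|)$ is a \emph{good pair}, giving $\HHS_i(|\Sigma^n|,|\Sigma^{n-1}|)\cong\widetilde{\HHS}_i(|\Sigma^n|/|\Sigma^{n-1}|)$, and that collapsing the $(n-1)$-skeleton identifies the quotient with a wedge of $n$-spheres, one per $n$-face; the computation of $\widetilde{\HHS}_i(S^n)$ then shows this group is free on the $n$-faces in degree $n$ and zero otherwise. The delicate part is verifying, with correct orientation bookkeeping, that $\Phi$ sends each simplicial generator to a generator of the corresponding reduced singular group, which rests on the excision theorem and the homotopy invariance used to evaluate $\widetilde{\HHS}_i(S^n)$. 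Once these identifications are in place, naturality of $\Phi$ makes the ladder of long exact sequences commute, and the Five Lemma, applied inductively from the base case $\Sigma^{-1}=\varnothing$ where both sides vanish, shows $\Phi_*\colon\HH_i(\Sigma^n)\to\HHS_i(|\Sigma^n|)$ is an isomorphism for all $i$. Taking $n=d$ yields the theorem.
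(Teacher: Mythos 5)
Your proposal is correct, and it is essentially the paper's own "proof": the paper does not prove this statement at all but cites it as Theorem 2.27 of \cite{Hatcher}, and your argument --- the comparison chain map built from characteristic maps, skeletal induction, the computation of the relative homology of the pairs $(\Sigma^n,\Sigma^{n-1})$ on both sides, and the Five Lemma applied to the ladder of long exact sequences --- is precisely the standard proof given in that reference, correctly specialized to finite complexes so that no direct-limit step is needed. Your preliminary remark reconciling the two possible readings of $\HHS_i(\Sigma)$ (singular homology of the geometric realization versus of the finite Alexandrov space, via McCord's theorem) is a sensible clarification given the paper's conventions, and the only part left as a sketch --- the orientation bookkeeping showing $\Phi$ carries simplicial generators to generators of the relative singular groups --- is exactly the step Hatcher handles via excision, so nothing essential is missing.
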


\begin{definition}
Let $X,Y\in\T$, and let $\pi_n(X,x)$ denote the homotopy group of the space $X$ at base point $x\in X$.\\
A map $f:X\to Y$ is a \textit{weak homotopy equivalence} if the following condition are both verified:
\begin{enumerate}
\item $f$ induces an isomorphism of the connected components of $X$ and $Y$
\[\Pi_0(f):\Pi_0(X)\to \Pi_0(Y)\]
\item for all $x\in X$, and $n\ge 1$ is an isomorphism on the homotopy groups 
\[\pi_n(f):\pi_n(X,x)\to \pi_n(Y,f(x))\]
\end{enumerate}
\end{definition}

There is the following result.
\begin{theorem}[McCord, \cite{mccord}]\label{mccord} 
Let $X\in\T$ with $X/\sim$ its Kolmogorov quotient, then $\mathcal{O}(X/\sim)\in\mathcal{F}$ is weak homotopy equivalent to $X.$
\end{theorem}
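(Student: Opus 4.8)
The plan is to deduce the statement from McCord's local-to-global criterion, after first reducing to the $\mathrm{T_0}$ case. I would begin by disposing of the quotient: by Proposition~\ref{kolmo} the map $q\colon X\to X/\sim$ is a homotopy equivalence, in particular a weak homotopy equivalence, and since being weakly homotopy equivalent is a transitive relation it suffices to exhibit a weak homotopy equivalence between $\mathcal{O}(X/\sim)$ and the finite $\mathrm{T_0}$ space $X/\sim$. Using Theorem~\ref{tzcongpc} I identify $X/\sim$ with its poset $P=(X/\sim,\le)$. Recall that in the Alexandrov topology the open sets are the upper sets, so the minimal open set containing a point $x$ is $V_x=\{y\in P:y\ge x\}$, and the family $\{V_x\}_{x\in P}$ is a basis which is moreover closed under the property that $z\in V_x\cap V_y$ forces $V_z\subseteq V_x\cap V_y$.

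Next I would construct the comparison map. Each point $\alpha$ of the geometric realization $|\mathcal{O}(P)|$ lies in the interior of a unique simplex, spanned by a chain $x_0<\cdots<x_k$ which I call the support of $\alpha$; I set $\mu(\alpha)$ to be the top element $x_k$ of that chain. Choosing the maximum (rather than the minimum) is precisely what makes the resulting map $\mu\colon|\mathcal{O}(P)|\to P$ continuous for the Alexandrov topology, since one checks directly that $\mu^{-1}(V_x)=\bigcup_{w\ge x}\mathrm{st}(w)$ is a union of open stars and hence open. This $\mu$ is the McCord map.

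The heart of the argument is then McCord's theorem \cite{mccord}: if a continuous map admits a basis of the target over each element of which it restricts to a weak homotopy equivalence, then the map itself is a weak homotopy equivalence. I would apply this to $\mu$ with the basis $\{V_x\}_{x\in P}$. On the base side, $V_x$ is a finite space possessing the minimum $x$, so the constant map at $x$ and the identity are comparable in the pointwise order and are therefore homotopic, whence $V_x$ is contractible. On the total-space side I would show that $\mu^{-1}(V_x)$ deformation retracts onto the full subcomplex $|\mathcal{O}(\{y\ge x\})|$, which is a cone with apex the vertex $x$ and hence contractible; consequently $\mu^{-1}(V_x)$ is contractible and the restriction $\mu\colon\mu^{-1}(V_x)\to V_x$, being a map between contractible spaces, is a weak homotopy equivalence. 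McCord's criterion then yields that $\mu$ is a weak homotopy equivalence, and composing with $q$ finishes the proof; note that $\mathcal{O}(X/\sim)\in\mathcal{F}$ by Remark~\ref{**}.

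The main obstacle is concentrated in the last paragraph and is twofold. The genuinely hard input is McCord's local-to-global criterion itself, whose proof compares $|\mathcal{O}(P)|$ with the homotopy colimit of the cover and is far from a routine verification; here I would simply invoke \cite{mccord}. The remaining technical point I must handle carefully is the contractibility of the preimages $\mu^{-1}(V_x)$: writing $\mu^{-1}(V_x)$ as a union of open stars, I would produce an explicit straight-line deformation in barycentric coordinates that kills the coordinates on the vertices lying below $x$ and renormalizes, thereby pushing each point into the closed face spanned by the terminal segment of its support contained in $\{y\ge x\}$; since the top vertex $x_k\ge x$ is never killed, the homotopy remains inside $\mu^{-1}(V_x)$, and I would check its continuity and that the retract $|\mathcal{O}(\{y\ge x\})|$ then contracts to the apex $x$.
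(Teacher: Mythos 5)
The paper offers no proof of this statement: it is imported verbatim from McCord \cite{mccord}, with a pointer to Chapter 1.4 of \cite{Barmak}, so there is nothing internal to compare against. Your sketch is a correct reproduction of that standard argument --- reduction to the $\mathrm{T_0}$ case via Proposition \ref{kolmo}, the McCord map $\mu$ sending a point of $|\mathcal{O}(P)|$ to the \emph{maximum} of its support (the right dualization, since the paper's Alexandrov topology takes \emph{lower} sets as closed, so minimal open sets are the up-sets $V_x$), and McCord's basis-like-cover criterion applied to $\{V_x\}$ with both $V_x$ and $\mu^{-1}(V_x)$ contractible --- and the only genuinely hard ingredient, the local-to-global criterion, is legitimately left as the citation. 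One small wording slip: the barycentric coordinates your retraction kills are those on the vertices of the support \emph{not in} $V_x$, i.e.\ not above $x$; these need not lie below $x$, though they do form the complementary initial segment of the chain, so the construction itself is unaffected.
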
 
We refer the interested reader to Chapter 1.4, \cite{Barmak}. In view of this result it makes sense to set $\Ord(X):=\Ord(X/\sim)$ for all $X\in\T.$

Theorem \ref{hom} and \ref{mccord} together imply that $\HHS_i(X)\cong \HH_i(\Ord(X))$ for all $X\in \T$.

Moreover, since $\Ord(X)$ is a { flag} complex $\HH_i(\Ord(X))=\HH_i(\cl( k_1(\Ord(X))))$, that is the graph homology of the graph which is the $1$-skeleton of $\Ord(X)$. Thus we can restrict ourselves to the study of the graph homology of $ k_1(\Ord(X))$.\\




In conclusion, the following diagram commutes:
\begin{equation} \label{****}
\xymatrix{\T\ar@{->}@< 2pt>[r]&\Tz \ar @{->} @< 2pt> [r]^{\HHS_i} \ar @{->} @< 0pt>[d]_\Ord &\mathrm{Vect}_k \\
&\mathcal{F} \ar @{->} @< 2pt> [r]^{k_1}
\ar @{<-} @<-2pt> [r] _\cl \ar @{->} [ur]^{\HH_i}&\SG \ar @{->} @< 1pt>[u]_{\HH_i}
}
\end{equation}


Although these observations are interesting per se, they become much more significant if we consider not only the homological structure of a data space but also its $P$-persistent properties. \\


\bibliographystyle{plain}

\begin{flushleft}
Francesco~Vaccarino\\
Dipartimento di Scienze Matematiche \\
Politecnico di Torino\\
C.so Duca degli Abruzzi n.24, Torino\\
10129, \ ITALIA \\
e-mail: \texttt{francesco.vaccarino@polito.it}\\
and\\
ISI Foundation\\
Via Alassio 11/c\\
10126 Torino - Italy\\
e-mail: \texttt{vaccarino@isi.it}\\
\bigskip

Alice~Patania\\
ISI Foundation\\
Via Alassio 11/c\\
10126 Torino - Italy\\
e-mail: \texttt{alice.patania@isi.it}\\
and\\
Dipartimento di Scienze Matematiche\\
Politecnico di Torino\\
C.so Duca degli Abruzzi n.24, Torino\\
10129, \ ITALIA \\
e-mail: \texttt{alice.patania@polito.it}\\

\bigskip
Giovanni~Petri\\
ISI Foundation\\
Via Alassio 11/c\\
10126 Torino - Italy\\
e-mail: \texttt{giovanni.petri@isi.it}

\end{flushleft}
\end{document}